\theoremstyle{plain}
\newtheorem{Theo}{Theorem}[section] 
\newtheorem{Pro}[Theo]{Proposition}        
\newtheorem{Lem}[Theo]{Lemma}            
\newtheorem{cor}[Theo]{Corollary}
\theoremstyle{definition}
\newtheorem{Defi}[Theo]{Definition}
\newtheorem{nota}[Theo]{Notation}
\newtheorem{hyp}[Theo]{Convention}
\theoremstyle{remark}
\newtheorem{rem}[Theo]{Remark}
\def\ogg~{{\rm \og}}   
\def\nn{\noindent}
\def\qq{\nn\quad}
\def\qqq{\nn\quad\quad}
\def\emptyset{\varnothing}
\def\NN{{\mathbb N}}    
\def\ZZ{{\mathbb Z}}     
\def\RR{{\mathbb R}}    
\def\AA{{\mathbb A}}    
   \def\cM{{\mathcal M}}          \def\cO{{\mathcal O}}              \def\cL{{\mathcal L}}     
\newcommand\ct{\operatorname{cotan}}         
\newcommand{\disf}[2]{D^+(#1,#2)}
\newcommand{\diso}[2]{D^-(#1,#2)}
\newcommand{\fdisf}[2]{\cO(D^+(#1,#2))}
\newcommand{\h}[1]{\mathscr{H} (#1)}
\newcommand{\A}[1]{\AA^{1,\mathrm{an}}_{#1}}
\newcommand{\D}[1]{\frac{\mathrm{d}}{\mathrm{d#1}}}
\newcommand{\Ct}[1]{\hat{\otimes}_{#1}}
\newcommand{\LL}[2]{\cL_{#1}(#2)}
\newcommand{\Lk}[1]{\LL{k}{#1}}
\newcommand{\nsp}[1]{\rVert #1\rVert_{\mathrm{Sp}}}
\newcommand{\piro}[2]{\pi_{#1/#2}}
\newcommand{\pik}[1]{\piro{#1}{k}}
\newcommand{\nor}[1]{\rVert #1\rVert}
\newcommand{\discf}[3]{D^+_{#1} (#2,#3)}
\newcommand{\disco}[3]{D^-_{#1} (#2,#3)}
\newcommand{\Couf}[4]{C^+_{#1} (#2,#3,#4)}
\newcommand{\Couo}[4]{C^-_{#1} (#2,#3,#4)}
\newcommand{\fCouf}[4]{\cO(C^+_{#1} (#2,#3,#4))}
\newcommand{\Fonction}[4]{\begin{array}[c]{rcl} 
                            #1&\longrightarrow&#2\\ #3&\mapsto&
                                                                #4\\ \end{array}}
\newcommand{\fra}[1]{\frac{1}{#1}}
\newcommand{\Dmod}[2]{#1-\text{\bf Mod}(#2)}
\renewcommand\phi{\varphi}
\renewcommand\epsilon{\varepsilon}
\def\ct{\hat{\otimes}}
\def \ot{\otimes}
\def \<{\langle}
\def \>{\rangle}
\def\|{\rVert}
\def\*{\blacklozenge}
\def\mr{\mathrm}
\def\dT{\frac{\mr{d}}{\mr{dT}}}
\def\d{\frac{\mr{d}}{\mr{dS}}}
\def\dz{\D{Z}}
\def \R+{\RR_{+}}
\def\Ak{\A{k}}
\def \Ao{\A{\Omega}}
\def \-1{^{-1}}
\def \Hx{\mathscr{H}(x)}
\def \Hy{\mathscr{H}(y)}
\def\Bigcup{\bigcup\limits}
\def \Sum{\sum\limits}
\def\Bigoplus{\bigoplus\limits}
\def\Min{\min\limits}
\def\(({(\!(}
\def\)){)\!)}
\def\DD{\mathscr{D}}
\def \KS{k\(( S\))}
\def \KT{k\(( T\))}
\def \ks{k[\![S]\!]}
\def\diffk{\Dmod{S\d}{\KS}}
\numberwithin{equation}{section}
\begin{document}

\title{Spectrum of a linear differential equation over a field of
  formal power series}

\author{Tinhinane A. AZZOUZ}
\address{Univ. Grenoble Alpes, CNRS, Institut Fourier, F-38000 Grenoble, France}
\curraddr{Université Alger 1, 02 Rue Didouche Mourad, Algiers, Algeria}
\email{t.azzouz@univ-alger.dz}



\begin{abstract}
  In this paper we associate to a linear differential equation with
coefficients in the field of Laurent formal power series a new
geometric object, a spectrum in the sense of Berkovich. We compute this
spectrum and show that it contains interesting informations about the equation. 
\end{abstract}
\maketitle
\section*{Introduction}
Over the centuries, the theory of differential equations represents an
important field of
mathematics. Notably, in the real and complex context, they
encode behaviours of many physical phenomena. However, in
ultrametric setting, they are rather related to algebraic and number theory problems. This part of the theory, appears around 1960, since Dwork's work on the variation of the Zeta function, and has become a central subject of
investigation.

The study of ultrametric differential equations is more
complicated than the usual context even for the linear case. Indeed,
the solutions of an ultrametric linear differential equation may have 
finite radii of convergence, even without the presence of poles.

However, these radius behaves in a very controlled way, and their
knowledge permits to obtain several informations about the
equation. In particular, under some assumption, if a differential
equation has two solutions with different radii, then it should
correspond to a decomposition of the equation. This was firstly
introduced by Dwork and mainly developed in  the works of Robba \cite{Rob75a},\cite{Rob75b},\cite{Rob80},
Dwork an Robba \cite{DR77}, Christol and Mebkhout\cite{CM00},
\cite{CM01}, Kedlaya\cite{Ked}, \cite{ked13}. We point out that a large part of the
literature is devoted to the two special cases: differential
equation over a germ of a punctured disk, or Robba ring. In the
special case of a trivial valued field $k$ of characteristic zero, the Robba ring corresponds
exactly to the field of Laurent formal power series $\KT$, and
the radii of convergence of a differential equation over $\KT$ are
strongly related to the  {\it formal} slopes (see for example \cite[Section~4.3]{and}). The main fundamental
decomposition and classification are expressed in the following way: Consider the differential field
$(k(\!(T)\!),T\dT)$. We mean by a differential equation with
coefficients in $k(\!(T)\!)$ a differential module $(M,\nabla)$ over
$(k(\!(T)\!),T\dT)$.
\begin{enumerate}
\item {\em Decomposition theorem according to the slopes}. Considering
  the $T$-adic valuation of the coefficients of the operator $\nabla$ in a
  cyclic basis,  we can associate a Newton polygon, called formal. The
  formal slopes of $(M,\nabla)$ are the slopes of this polygon. The
  decomposition theorem is the following. 
  \begin{Theo}[{\cite[p. 97-107]{DMR07}}]
    Let $\gamma_1<\cdots<\gamma_\mu$ be the slopes of the formal
    Newton polygon
    of $(M,\nabla)$, with multiplicity  $n_1,\cdots,
    n_\mu$ respectively. Then \[(M,\nabla)=\bigoplus_{i=1}^\mu(M_{\gamma_i},\nabla_{\gamma_i}),\]
    where $M_{\gamma_i}$ has dimension $n_i$ and  a unique slope $\gamma_i$ with multiplicity $n_i$.
  \end{Theo}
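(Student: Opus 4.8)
The plan is to reduce the statement to a factorization property of a single twisted differential operator and then to split the resulting filtration using a vanishing of cross-slope homomorphisms. First I would invoke the cyclic vector theorem: since $k((T))$ has characteristic $0$ and the derivation $\delta := T\dT$ is nonzero, $(M,\nabla)$ is isomorphic to $k((T))[\delta]/k((T))[\delta]\,L$ for a monic operator $L=\delta^n+a_{n-1}\delta^{n-1}+\cdots+a_0$ of order $n=\dim M$, where $k((T))[\delta]$ is the noncommutative ring with $\delta a=a\delta+\delta(a)$. The Newton polygon of $L$ is the lower convex hull of the points $(i,v(a_i))$, with $v$ the $T$-adic valuation; its slopes are the $\gamma_i$ and the horizontal lengths of its segments are the $n_i$. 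The goal thereby becomes: factor $L$ as a product of monic operators, one pure of each slope.

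The heart of the argument is a Hensel-type factorization in $k((T))[\delta]$. I would prove that if the Newton polygon of $L$ has a lowest slope $\gamma_1$ of length $n_1<n$, then $L=A\,B$ with $B$ monic of order $n_1$ whose polygon is the single segment of slope $\gamma_1$, and $A$ monic of order $n-n_1$ carrying the remaining slopes. The proof is by successive approximation: the break of the polygon at a vertex furnishes an initial approximate factorization, and each correction step amounts to solving a Sylvester-type equation modulo a higher power of $T$. The corrections converge $T$-adically, and the convergence relies crucially on the fact that $\delta=T\dT$ does not decrease valuation, $v(\delta(a))\geq v(a)$ — this is precisely why the derivation $T\dT$, rather than $\tfrac{d}{dT}$, is used. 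Iterating gives $L=L_\mu\cdots L_1$ with each $L_i$ monic, pure of slope $\gamma_i$ and of order $n_i$. Since every factorization $L=A\,B$ produces a short exact sequence $0\to k((T))[\delta]/(A)\to M\to k((T))[\delta]/(B)\to 0$ of differential modules, induction on the number of factors yields a filtration of $M$ whose graded pieces $M_{\gamma_i}$ are pure of slope $\gamma_i$ and dimension $n_i$.

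It then remains to upgrade this filtration to a direct sum, which I would do by showing that extensions between pure modules of distinct slopes are trivial. For pure-slope modules $N_\gamma,N_{\gamma'}$ with $\gamma\neq\gamma'$, the internal Hom $N_\gamma^\vee\otimes N_{\gamma'}$ is again pure, of slope $\max(\gamma,\gamma')$; since $\gamma_j>\gamma_1\geq 0$ for $j\geq 2$, each relevant internal Hom has strictly positive slope. A differential module of strictly positive slope has no nonzero horizontal section and $\nabla$ acts bijectively on it (the inhomogeneous equation can be solved coefficient by coefficient, starting from the pole). Hence $\mathrm{Hom}=\ker\nabla=0$ and $\mathrm{Ext}^1=\mathrm{coker}\,\nabla=0$ across distinct slopes, so the filtration splits canonically; the same vanishing gives uniqueness of the decomposition. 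A final induction on $\mu$ assembles $(M,\nabla)=\bigoplus_{i=1}^\mu(M_{\gamma_i},\nabla_{\gamma_i})$.

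The main obstacle I expect is the noncommutative Hensel/Newton factorization in $k((T))[\delta]$ and the convergence of the successive-approximation scheme: one must set up the Newton polygon carefully in the twisted ring and control the correction terms, and it is exactly here that the choice of derivation $T\dT$ is indispensable.
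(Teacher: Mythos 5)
Your outline is sound and is essentially the classical argument, but note first that the paper does not prove this statement at all: it is quoted from \cite[p.~97--107]{DMR07}, and the only related machinery the paper actually imports (the factorization Proposition cited from \cite[Proposition~12.1]{christol} and \cite[Theorem~3.48]{vanga}) reaches the direct sum by a different device than yours. There one produces a \emph{two-sided} factorization $P=RQ=Q'R'$ with $R,R'$ pure of slope $\gamma$ and all slopes of $Q,Q'$ different from $\gamma$; since the pure part then occurs both as a quotient (from one factorization) and as a submodule (from the other), $\DD_{\KS}/\DD_{\KS}P$ splits directly, with no cohomology. Your route keeps only the one-sided Hensel--Newton factorization and instead splits the resulting filtration by the vanishing of $\mathrm{Hom}$ and $\mathrm{Ext}^1$ across distinct slopes. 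The trade-off: your argument yields canonicity and uniqueness of the decomposition for free and is the formulation that generalizes to other slope-filtration settings, while the two-sided factorization stays entirely inside the noncommutative-polynomial calculus and avoids the one delicate point in your plan, namely the assertion that $N_\gamma^\vee\otimes N_{\gamma'}$ is pure of slope $\max(\gamma,\gamma')$. In most references that tensor compatibility of slopes is proved \emph{after} (or by means of) the decomposition theorem or Turrittin's theorem, so quoting it at this stage risks circularity.

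This is not a fatal gap, because what you actually need is weaker than purity: only that $\nabla$ acts bijectively on each cross-slope internal Hom. That can be proved directly. Put each pure module in a standard form in which its matrix is $T^{-\gamma}(A_0+o(1))$ with $A_0$ an invertible constant matrix (rescale the companion matrix of a pure operator by $\mathrm{diag}(1,T^{\gamma},\dots,T^{(n-1)\gamma})$, ramifying $T$ when $\gamma$ is fractional and descending afterwards, exactly as the paper descends spectra along $I_{F*}{I_F}^{*}$ in Section~\ref{sec:spectr-diff-module-4}). On the internal Hom the connection is then $\phi\mapsto A_1\phi-\phi A_2+\delta(\phi)$, whose Sylvester part has invertible leading term of weight $-\max(\gamma,\gamma')<0$ while $\delta$ does not decrease valuation; a geometric-series argument in the $T$-adically complete filtered module gives bijectivity, hence $\mathrm{Hom}=\ker\nabla=0$ and $\mathrm{Ext}^1=\mathrm{coker}\,\nabla=0$. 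With that step made explicit, together with the noncommutative Hensel lemma you already identify as the technical core, your proof is complete.
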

\item {\em The Turrittin-Levelt-Hukuhara decomposition
    theorem} \cite{Kat70}, \cite{Rob80}, \cite{Ked}. It
  claims that for any differential module $(M,\nabla)$, there exists a
  suitable finite extension $k'(\!(T^{\fra{n}})\!)$ of $k(\!(T)\!)$
  for which the pull-back of $(M,\nabla)$ with respect to this
  extension is an extension of differential modules of rank
  one.
\end{enumerate}

This paper is a continuity of our work \cite{Cons}, where we
introduce, a new geometric invariante, 
the spectrum in the sense of Berkovich of a differential
module, and develop some material for the computation of this one.

In this paper we focus on the computation of the spectrum of a
differential module over $(\KT,T\dT)$, we show that it contains
intersting information. More precisely, we can
recover in one hand all the formal slopes of the differential module,
in the other hand the exponentes of the regular part
of the differential module. For this purpose we will use the
classification results listed above.
Before announcing the main
result of the paper, we shall recall quickly the notion of the spectrum in the
sense of Berkovich. 

Recall that for an element $f$ of a non-zero $k$-algebra $E$ with unit, the classical
spectrum of $f$ is the set
\[\{ a\in k;\; f-a.1_E\text{ is not invertible in } E\}.\]
This set may be empty, even if $E$ is a $k$-Banach algebra. To  deal
with this issue Berkovich proposed to consider the spectrum not as a subset
of $k$, but
as a subset of the analytic affine line $\Ak$ (which is a bigger space
than $k$)\cite[Chapter 7]{Ber}. Let $E$ be a non-zero $k$-Banach
algebra and $f\in E$. The spectrum $\Sigma_{f,k}(E)$ of $f$ in the sense of Berkovich is
the set of points of $\Ak$ that correspond to a pair $(\Omega,c)$,
where $\Omega$ is a complete extension of $k$ and $c\in \Omega$, for
which $f\ot 1-1\ot c$ is not invertible in $E\ct_k\Omega$
. This
spectrum is non-empty, compact and satisfies other nice properties
(cf.\cite[Theorem 7.1.2]{Ber}).

Let $(M,\nabla)$ be a differential module over
$(k(\!(T)\!),T\dT)$. Let $r\in (0,1)$ be a real positive number. From now on we endow $k(\!(T)\!)$
with the $T$-adic absolute value given by
\begin{equation}
  \label{eq:26}
  |\sum_{i\geq N} a_iT^{i}|:=r^{N},
\end{equation}
if $a_N\ne
  0$. In this setting $k(\!(T)\!)$ is a complete valued field and the induced valuation on $k$ is the trivial valuation. From
  now on we endow $k$ with the trivial valuation. We can endow $M$ 
  with $k(\!(T)\!)$-Banach structure, moreover it induces a $k$-Banach structure for which
  $\nabla:M\to M$
  is a bounded operator. As in our previous work \cite{Cons}, the
  spectrum $\Sigma_{\nabla,k}(\Lk{M})$ of $(M,\nabla)$ will be the spectrum of $\nabla$ as an
  element of the $k$-Banach algebra $\Lk{M}$ of bounded endomorphism of
  $M$ with respect to operator norm.

Notice that we cannot use the classical index theorem of B. Malgrange
  \cite{Malgrange-Irreg} to compute neither the spectrum in the sense
  of Berkovich nor the classical spectrum of $\nabla$. Indeed, it is relatively easy to show that any non trivial rank
  one connection on $k\(( T\))$ is set-theoretically bijective. However, the set-theoretical inverse of the
  connection may not be bounded. This is due to the fact
  that the base field $k$ is trivially valued and  Banach open
  mapping theorem does not hold in this setting.  

  For any positive real number $l$,
  we set $x_{0,l}$ to be the point
  of $\Ak$ associated to $l$-Gauss norm on $k[T]$
  (i.e. $\sum_ia_iT^i\mapsto \max_i|a_i|l^i$). The main result of the paper is the following:

  \begin{Theo}\label{sec:introduction}
  Assume that $k$ is algebraically closed. Let $r\in (0,1)$. Assume that $(\KS,|.|)\simeq (\h{x_{0,r}},|.|)$. Let $(M,\nabla)$ be a differential module over $(k(\!(T)\!),T\dT)$. Let
  $\{\gamma_1,\cdots, \gamma_{\nu}\}$ be the set of the slopes of
  $(M,\nabla)$ and let
  $\{a_1,\cdots, a_{\mu}\}$ be the set of the exponents of the regular
  part of $(M,\nabla)$. Then the spectrum of $\nabla$ as an element of
  $\Lk{M}$ is:
  \[\Sigma_{\nabla,k}(\Lk{M})=\{x_{0,r^{-\gamma_1}},\cdots,
    x_{0,r^{-\gamma_{\nu}}}\}\cup \bigcup_{i=1}^{\mu}(a_i+\ZZ).\]
\end{Theo}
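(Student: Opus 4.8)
The plan is to reduce the computation to the two extreme cases---purely regular and purely irregular single-slope modules---and to treat each by a direct analysis of the resolvent of $\nabla$ acting on the $k$-Banach space $M$. First I would record the general principle that the Berkovich spectrum is additive over direct sums: if $(M,\nabla)=\bigoplus_i(M_i,\nabla_i)$ as differential modules, then $\nabla$ is block diagonal for the induced Banach decomposition and $\Sigma_{\nabla,k}(\Lk{M})=\bigcup_i\Sigma_{\nabla_i,k}(\Lk{M_i})$, both inclusions following at once from the definition of invertibility in $\Lk{M}\ct_k\Omega$. Applying the slope decomposition theorem quoted above, it suffices to compute the spectrum of each isoclinic component $(M_{\gamma},\nabla_{\gamma})$ separately.

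\textbf{Regular case} ($\gamma=0$). Here I would use that over an algebraically closed field of characteristic $0$ a regular singular module admits, after a gauge transformation, a normal form $\nabla=T\dT+A_0$ with $A_0$ a constant matrix whose eigenvalues are representatives $a_1,\dots,a_{\nu_2}$ of the exponents modulo $\ZZ$; one must check that the gauge matrix and its inverse are bounded on $M$, so that conjugation preserves the spectrum. Upper-triangularising $A_0$ and evaluating $\nabla-c$ on the monomial basis $\{T^n e_j\}$, the operator acts on the diagonal by $n+a_j-c$, so $\nabla-c$ fails to be bijective exactly when $c=a_j+n$ for some $j$ and some $n\in\ZZ$. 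The decisive point, special to the trivially valued base, is that any nonzero difference $n+a_j-c$ has absolute value $1$ (respectively a transcendental, hence nonzero, reduction at a Gauss point), so the set-theoretic inverse is automatically bounded off these points. This simultaneously produces the set $\bigcup_i(a_i+\ZZ)$ of type-$1$ points and rules out every other point of $\Ak$, including the Gauss points.

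\textbf{Irregular case} ($\gamma\neq 0$), which I expect to be the main obstacle. I would first reduce to rank one via the Turrittin--Levelt--Hukuhara decomposition over a ramified extension $k'(\!(T^{1/n})\!)$, using the compatibility of the Berkovich spectrum with the projection $\A{k'}\to\Ak$ under base change and with the pullback along $T\mapsto T^{1/n}$; controlling this descent precisely is one of the delicate steps. For a rank one module $\nabla=T\dT+f$ with $f$ of pole order $\gamma$, the dominant term is multiplication by $c_{-\gamma}T^{-\gamma}$, an invertible operator of norm $r^{-\gamma}\neq 1$, whereas $T\dT$ has norm $1$. Factoring out this dominant term when $|T(x)|<r^{-\gamma}$, and factoring out the scalar $c$ when $|T(x)|>r^{-\gamma}$, an ultrametric Neumann series shows that $\nabla\otimes 1-1\otimes c$ is invertible in $\Lk{M}\ct_k\Omega$ for every point $x=(\Omega,c)$ with $|T(x)|\neq r^{-\gamma}$. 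Hence the spectrum is contained in the locus $|T|=r^{-\gamma}$, which, since $r^{-\gamma}\neq 1$, reduces to the single point $x_{0,r^{-\gamma}}$ of $\Ak$; as the Berkovich spectrum is nonempty, it equals $\{x_{0,r^{-\gamma}}\}$.

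The hard part will be making the resolvent estimates uniform over arbitrary complete extensions $\Omega$ rather than only over type-$1$ points $c\in k$, since the naive eigenvalue computation sees nothing beyond $k$; one must work throughout with the operator on $M\ct_k\Omega$ and track the norm of the inverse as a function of $|T(x)|$ and $r$, and likewise verify that the lower-order terms of $f$ and the ramified descent contribute no spurious spectrum. Combining the two cases through the slope decomposition, the regular block contributes the type-$1$ points $\bigcup_i(a_i+\ZZ)$, sitting at $|T|\leq 1$, while each irregular slope $\gamma_i$ contributes the Gauss point $x_{0,r^{-\gamma_i}}$, sitting at $|T|=r^{-\gamma_i}\neq 1$; the two families lie at different radii and are therefore disjoint, so their union is exactly the asserted spectrum.
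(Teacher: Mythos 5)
Your overall strategy (slope decomposition, then regular and rank-one irregular blocks) is close to the paper's, and your irregular-slope analysis is essentially the argument the paper gives in Proposition~\ref{sec:spectr-line-diff-4} and Proposition~\ref{sec:spectr-diff-module-1}: compare $\nabla$ with the multiplication operator by the dominant part of $f$, use $\nor{T\frac{\mathrm{d}}{\mathrm{d}T}}_{\mathrm{op}}=1$ and an ultrametric Neumann series, then descend along the ramified extension. But your regular case contains a genuine error. You claim the regular block contributes exactly $\bigcup_i(a_i+\ZZ)$ and that the argument ``rules out every other point of $\Ak$, including the Gauss points.'' This is false: the spectrum of $T\frac{\mathrm{d}}{\mathrm{d}T}$ on $k(\!(T)\!)$ is $\ZZ\cup\{x_{0,1}\}$ (Proposition~\ref{sec:spectr-line-diff-1}), and the regular part contributes $\bigcup_i(a_i+\ZZ)\cup\{x_{0,1}\}$ (Theorem~\ref{sec:spectr-diff-module}). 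Indeed, since $k$ is trivially valued, every neighbourhood of the Gauss point $x_{0,1}$ in $\Ak$ contains all but finitely many points of $k$; hence the infinite set $\bigcup_i(a_i+\ZZ)$ accumulates at $x_{0,1}$, it is not closed, and so it cannot be a Berkovich spectrum, which is compact by Theorem~\ref{sec:defin-basic-propr}. Your final formula is even inconsistent with the statement you are proving: when the regular part is nonzero, $0$ is one of the slopes $\gamma_i$, so the asserted spectrum contains $x_{0,r^{-0}}=x_{0,1}$, which your union omits.

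The root of the error is the step ``the set-theoretic inverse is automatically bounded off these points.'' Membership of a point $x$ in $\Sigma_{\nabla,k}(\Lk{M})$ is about invertibility of $\nabla\ot 1-1\ot T(x)$ in the Banach algebra $\Lk{M}\ct_k\Hx$, which is strictly stronger than bounded invertibility of the induced operator on $M\ct_k\Hx$. At $x=x_{0,1}$ your diagonal computation produces the candidate inverse $\sum_{n}P_n\ot (n+a_j-T(x))^{-1}$ (with $P_n$ the projections onto the monomial lines); each summand has norm $1$, so this series does not converge in $\Lk{M}\ct_k\hh{x_{0,1}}$, and nonvanishing of the diagonal entries certifies nothing. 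This is exactly the pitfall the paper flags in the introduction (failure of the open mapping theorem over a trivially valued field) and avoids by using two different tools: compactness of the spectrum to force $x_{0,1}$ \emph{into} it, and Lemma~\ref{sec:defin-basic-propr-1} (the largest spectrum-free open disk around a point $a\in k$ has radius $\nsp{(\nabla-a)^{-1}}^{-1}$) to convert norm estimates at rational points into exclusion of the non-rational points. Your proof needs to be repaired along these lines; the irregular case and the final gluing can stay as they are, once the regular block's contribution is corrected to $\bigcup_i(a_i+\ZZ)\cup\{x_{0,1}\}$.
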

This result clearly shows the importance of the points of
the spectrum that are not in $k$. Indeed, form these points we can
recover the slopes without multiplicity of the differential module. Therefore, we can say that the spectrum is highly
connected not only to the smallest radius of convergence, which was
the first motivation of our work, but to all radii of convergence of
the solutions of a linear differential equation. 

On other hand, although differential modules over $(\KS,S\d)$
  are algebraic objects, their spectra in the sense of Berkovich depend highly on the choice of
  the absolute value on $\KS$.

  The paper is organized as follows.  Section~\ref{sec:defin-an-notat}
  is devoted to providing setting and notations. More precisely, we
  will recall the definition of the analytic
  affine line $\Ak$, and give, in the setting of the paper, a precise topological description for
  disks and annuli of $\Ak$. We will recall also the definition of the spectrum in the sense of
  Berkovich.

  In Section~\ref{sec:diff-modul-over}, we introduce the spectrum of
  a differential module and recall some properties given in
  \cite{Cons}. We also explain the behavior of this spectrum after ramified ground field extension. In the end of the
  section we recall the definition of the formal Newton polygon.

Section~\ref{sec:spectr-diff-module-5} is devoted to announcing and proving the main result of
  the paper. Using the decomposition theorem according to the slopes,
  we can reduce the probleme to the computation of the spectrum of 
  regular singular differential modules and differential modules without
  regular part. Since \cite[Proposition~3.15]{Cons}, in order to
  compute the spectrum of a regular singular
  module, it is enough to compute the spectrum of
  $T\dT$ as an element of $\Lk{k(\!(T)\!)}$. For differential modules without regular part, we reduce the computation of the spectrum to the case of differential modules of rank one. This is possible by using
  Turrittin-Levelt-Hukura decomposition theorem and the behaviour of
  the spectrum after ramified ground field extension.

  In the last section, we give a short discussion about the result and
  explain more precisely how it connect between two notions, the
  spectrum and radii of convergence. 
  \subsection*{Acknowledgments}The author wishes to express her gratitude to her
advisors Andrea Pulita and Jérôme Poineau for their precious advice
and suggestions, and for careful reading. She also thank Francesco Baldassarri, Frits
Beukers, Antoine Ducros and Françoise Truc for
useful occasional discussions and suggestions. 

\section{Definitions and notations}\label{sec:defin-an-notat}
We will denote by $\RR$ the field of real numbers, by $\ZZ$ the ring
of integers and by $\NN$ the set of natural numbers. We set
$\R+:=\{r\in \RR;\; r\in \R+\}$.

In all the paper we fix $(k,|.|)$ to be an algebraically closed field
of characteristic $0$ equipped with a trivial valuation. Let $E(k)$ be
the category whose objects are $(\Omega,|.|_\Omega)$, where $\Omega$
is a field extension of $k$, complete with respect to the valuation
$|.|_\Omega$, and whose isomorphisms are isometric rings morphisms.
\subsection*{Analytic affin line and Berkovich spectrum} In order to
get a better visualisation of the spectrum, we shall give an
illustration of the analytic affine line.

Let $\Omega\in E(k)$. We consider $\Omega$-analytic spaces in the sense of Berkovich (see
\cite{Ber}). We denote by $\Ao$ the affine analytic line over the
ground field $\Omega$. 
Recall that a point $x\in \Ao$ corresponds to a multiplicative
semi-norm $|.|_x$ on $\Omega[T]$ (i.e. $|0|_x=0$, $|1|_x=1$,  $|P-Q|_x\leq
\max(|P|_x,|Q|_x)$ and $|P\cdot Q|_x=|P|_x\cdot|Q|_x$ for all $P$, $Q\in k[T]$) whose restriction
coincides with the absolute value of $\Omega$.
\begin{nota}
  Let $x\in \Ao$. We denote by $\Hx$ the complete residue field
  associated to $x$.
\end{nota}

\begin{nota}
  Let $c\in \Omega$ and $r\in \R+$. Denote by $x_{c,r}$ the point that corresponds to the semi-norm (norm
if $r>0$)

\begin{equation}
  \label{eq:1}
  \Fonction{k[T]}{\R+}{\sum_{i=0}^na_i(T-c)^i}{\max_i|a_i|r^i}.
\end{equation}

\end{nota}

\begin{rem}
  Since $(k,|.|)$ is trivially valued, any point of $\Ak$ is a point of the form $x_{c,r}$. The points of the form $x_{c,0}$ coincides with the element of $k$.
\end{rem}

For a point $x_{c,r}\in \Ak$ with $r>0$ the field $\h{x_{c,r}}$ can be
described more concretely. The case where $r=0$ is trivial, indeed
$\h{x_{c,0}}\simeq k$.

In the case where $r<1$, the field $\h{x_{c,r}}$ coincides with the
field of Laurent formal power series

\begin{equation}
  \label{eq:2}
k(\!(T-c)\!):=\left\{\sum_{i\geq N}a_i(T-c)^i;\; a_i\in k,\; N\in
    \ZZ\right\}.
\end{equation}
 equipped with  $(T-c)$-adic
  absolute value given by $|\sum_{i\geq N} a_i(T-c)^i|:=r^N$, if $a_N\ne
  0$.

  If $r>1$, then $\h{x_{c,r}}$ coincides with $k(\!( (T-c)^{-1})\!)$ equipped with the $(T-c)\-1$-adic
  absolute value given by $|\sum_{i\geq N} a_i(T-c)^{-i}|:=r^{-N}$, if $a_N\ne
  0$.

  Otherwise, $\h{x_{c,1}}$ coincides with $k(T)$ equipped with the
  trivial absolute value.
  Let $\Omega\in E(k)$ and $c\in
\Omega$. For $r\in \R+\setminus\{0\}$ we set

\[\discf{\Omega}{c}{r}:=\{x\in \A{\Omega};\; |T(x)-c|\leq r\}\]
and 

\[\disco{\Omega}{c}{r}:=\{x\in \A{\Omega};\; |T(x)-c|< r\}.\]

The point $x_{c,r}\in \A{\Omega}$ corresponds to the disk
$\discf{\Omega}{c}{r}$, more precisely it does not depend on the of
the center $c$ (cf. \cite[Section~1.4.4]{Ber}).\\
Let $c\in k$. The map

\begin{equation}
  \label{eq:3}
  \Fonction{[0,+\infty)}{\Ak}{r}{x_{c,r}}
\end{equation}
induces an homeomorphism between $[0,+\infty)$ and its image. Since
the valuation is trivial on $k$,  we can
describe disks of $\Ak$ as follows.

\begin{nota}
  We denote by $[x_{c,r},\infty)$ (resp. $(x_{c,r},\infty)$) the
  image of $[r,\infty)$ (resp. $(r,\infty)$), by $[x_{c,r}, x_{c,r'}]$
  (resp. $(x_{c,r},x_{c,r'}]$, $[x_{c,r},x_{c,r'})$,
  $(x_{c,r},x_{c,r'})$) the image of $[r,r']$ (resp. $(r,r']$,
  $[r,r')$, $(r,r')$).
\end{nota}

Let $c\in k$ and $r\in \R+$. In the case where $r<1$,  we have

\begin{equation}
  \label{eq:4}
  \discf{k}{c}{r}=[c,x_{c,r}] \qqq \disco{k}{c}{r}=[c,x_{c,r}).
\end{equation}
If $r>1$, recall that for all $a\in k$, $x_{a,1}=x_{0,1}$ and we have 
\begin{equation}
  \label{eq:5}
  \discf{k}{c}{r}=\coprod_{a\in k}[a,x_{0,1})\coprod
  [x_{0,1},x_{0,r}].
\end{equation}
\begin{equation}
  \label{eq:6}
  \disco{k}{c}{r}=
  \begin{cases}
    [c,x_{0,1})& \text{if } r=1\\
    & \\
    \coprod\limits_{a\in k}[a,x_{0,1})\coprod [x_{0,1},x_{0,r})& \text{ otherwise}\\
  \end{cases}.
\end{equation}
\\%
For $r_1$, $r_2\in \R+$, such that $0<r_1\leq r_2$ we set

\[\Couf{\Omega}{c}{r_1}{r_2}:=\discf{\Omega}{c}{r_2}\setminus\disco{\Omega}{c}{r_1}\]
and for $r_1<r_2$ we set:

\[\Couo{\Omega}{c}{r_1}{r_2}:=\disco{\Omega}{c}{r_2}\setminus\discf{\Omega}{c}{r_1}.\]
We may suppress  the index $\Omega$ when it is obvious from the context.

Let $X$ be an affinoid domain of $\A{\Omega}$, we denote by $\cO(X)$ the
$\Omega$-Banach algebra of global sections of $X$.

Since $k$ is trivially valued, if $r<1$ we have

\begin{equation}
  \label{eq:7}
  \fdisf{c}{r}=k[\![T-c]\!]:=\{\sum\limits_{i\in \NN} a_i(T-c)^i;\,
  a_i\in k\},
\end{equation}
otherwise,
\begin{equation}
  \label{eq:8}
  \fdisf{c}{r}=k[T-c].
\end{equation}
In the both cases, the multiplicative norm on $\fdisf{c}{r}$ is :
  \begin{equation}
\label{eq:9}
    \nor{\sum\limits_{i\in \NN}a_i(T-c)^i}=\max_{i\in
  \NN}|a_i|r^i.
\end{equation}
Let $X=\Couf{\Omega}{c}{r_1}{r_2}$ 
\begin{equation}
  \label{eq:10}
\cO(\Couf{\Omega}{c}{r_1}{r_2})=\left\{\sum\limits_{i\in \NN\setminus\{0\}}
  \dfrac{a_{i}}{(T-c)^i};\; a_i\in \Omega,\;|a_{i}|r_1^{-i}\to 0
\right\}\oplus\fdisf{c}{r_2}.
\end{equation}
where
$\nor{\sum\limits_{i\in\NN\setminus\{0\}}\frac{a_{i}}{(T-c)^i}}=\max_i|a_{i}|r_1^{-i}$ and
the sum above is equipped with the maximum norm.\\
\begin{nota}\label{sec:analytic-affin-line}
  Let $X$ be an analytic domain of $\Ak$, and $f\in \cO(X)$. We can
  see $f$ as an analytic map $X\to \Ak$ that we still denote by $f$.
\end{nota}

Recall now the definition of the spectrum introduced by Berkovich.
\begin{Defi}
  Let $E$ be $k$-Banach algebra with unit and $f\in E$.
  The spectrum of $f$ is the set $\Sigma_{f,k}(E)$ of points $x\in
  \Ak$ such that the element $f\ot 1-1\ot T(x)$ is not invertible in
  the $k$-Banach algebra $E\Ct{k}\Hx$.
\end{Defi}
\begin{rem}
  If there is no confusion we denote the spectrum of $f$, as an element of $E$,  just by $\Sigma_f$.
\end{rem}

\begin{rem}
  The set $\Sigma_f\cap k$ coincides with the classical spectrum, i.e.
  \[\Sigma_f\cap k=\{a\in k;\; f-a \text{ is not invertible in } E\}.\]
\end{rem}

\section{Differential modules over $\KS$ and spectra}\label{sec:diff-modul-over}
In this section we recall some properties of differential module
$(M,\nabla)$ over $(F,d)$, where $(F,d)$ is a finite differential
extension of $(\KS,S\d)$. We recall also the definition of the
spectrum of a differential module $(M,\nabla)$ over $(\KS,S\d)$,
introduced in our previous work \cite{Cons}, and explain more
precisely its behavior under ramification of the indeterminate $S$.

 \begin{nota}
 Let $(F,d)$ be a differential field. We denote by $d$-{\bf Mod}($F$) the category of differentiel modules over $(F,d)$ whose arrows are
 morphisms of differential modules.
\end{nota}

\begin{nota}
Let $(F,d)$ be a differential field. We set
$\DD_F:=\Bigoplus_{i\in\NN}F.D^i$ to be the ring of differential
polynomials on $D$ with coefficients in $F$, where the multiplication is
non-commutative and defined as follows: $D.f=d(f)+f.D$ for all $f\in
A$. Let
$P(D)=g_0+\cdots+g_{n-1}D^{n-1}+D^n$ be a monic differential
polynomial. The quotient $\DD_F/\DD_F.P(D)$ is an $F$-vector space of
dimension $n$. Equipped with the multiplication by $D$, it is a
differential module over $(F,d)$.                
\end{nota}

\begin{rem}
 Any differential module over a differential field $(F,d)$, with $d\ne
 0$, is isomorphic to $\DD_F/\DD_F.P(D)$, for some monic differential
 polynomial $P(D)$, thanks to {\it the cyclic vector theorem}. 
\end{rem}

\begin{hyp}
  We fix $r\in (0,1)$ and endow $\KS$ with the $S$-adic
  absolute value given by $|\sum_{i\geq N} a_iS^i|:=r^N$, if $a_N\ne
  0$. In this setting the pair  $(\KS,|.|)$ coincides
  with $\h{x_{0,r}}$, where $x_{0,r}\in \Ak$.
\end{hyp}

\subsection{Spectrum of a differential module}
Recall that if $F$ is a finite extension of $k\((S\))$ of degree $m$,
then we have $F\simeq k\(( S^{\frac{1}{m}}\))$ \cite[Proposition
3.3]{vanga}. The absolute value $|.|$ on $\KS$ extends uniquely to an
absolute value on $F$. The pair $(F,|.|)$ is an element of $E(k)$ and
can be identified with $\h{x_{0,r^{\fra{m}}}}$. Moreover, the
derivation $S\d$ extends uniquely to a bounded derivation $d$ on $F$,
satisfying $d(S^\fra{m})=\fra{m}S^\fra{m}$. Conversely, any finite
differential extension field of $(\KS,S\d)$ is obtained in this way.

We shall recall quickly the construction of the spectrum of a
differential module, which is introduced more precisely in \cite[Section~3.2]{Cons}. Let $(M,\nabla)$ be a differential module over some differential
extension field $(F,d)$ of $(\KS,S\d)$. The $F$-vector space $M$ can
be endowed with a structure of $F$-Banach space, unique up to
bi-bouned isomorphism of $F$-Banach space. In this sitting, the
operator $\nabla$ can be seen as an element of $(\Lk{M},\nor{.}_{\mr{op}})$, the
$k$-Banach algebra of bounded $k$-endomorphisms of $M$ with respect to
the {\it operator norm}. The spectrum
of $(M,\nabla)$ is the spectrum of $\nabla$ as an element of $\Lk{M}$,
that we denote by $\Sigma_{\nabla,k}(\Lk{M})$ (or just
$\Sigma_\nabla$ if it is obvious from the context). This spectrum is a
compact non-empty set, moreover the smallest closed disk centring in
zero containing $\Sigma_{\nabla,k}(\Lk{M})$ has radius equal to
$\nsp{\nabla}=\lim\limits_{n\to
  +\infty}\nor{\nabla}_{\mr{op}}^{\fra{m}}$ (see \cite[Theorem~7.1.2]{Ber}). Stressing also that this
spectrum is invariant by bi-bounded isomorphisms of differential
modules.

However, it depends on the choice of the
derivation. We will see further that $S\d$ has a good behavior under ramification of
the indeterminate, which makes the choice of $S\d$ more
conviniente for the computation of the spectrum.

We need now to recall  importante materials developed in our previous work
\cite{Cons}, that are very necessary to the computation of the
spectrum.

\begin{Pro}
   Let $(M,\nabla)$, $(M_1,\nabla_1)$  and $(M_2,\nabla_2)$ be three
   differential modules over $(F,d)$. If we have two exact
   sequences of the form:
   \[0\to (M_1,\nabla_1)\to (M,\nabla)\to (M_2,\nabla_2)\to 0\]
   \[0\to (M_2,\nabla_2)\to (M,\nabla)\to (M_1,\nabla_1)\to 0.\]
   Then we have $\Sigma_\nabla(\Lk{M})=\Sigma_{\nabla_1}(\Lk{M_1})\cup \Sigma_{\nabla_2}(\Lk{M_2})$.
   
 \end{Pro}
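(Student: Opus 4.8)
The plan is to translate the statement about $\Sigma_\nabla$ into the invertibility, for each $x\in\Ak$, of the operator $S:=\nabla\ot 1-1\ot T(x)$ inside the $k$-Banach algebra $\Lk{M}\ct_k\h{x}$, and then to exploit the two hypothesized exact sequences to put $S$ in block-triangular form in two complementary ways. Fix $x$, write $c:=T(x)$, and set $S_i:=\nabla_i\ot1-1\ot c$ for $i=1,2$; by definition $x\in\Sigma_{\nabla_i}(\Lk{M_i})$ means exactly that $S_i$ is not invertible in $\Lk{M_i}\ct_k\h{x}$. From the first exact sequence the image of $M_1$ is a $\nabla$-stable $F$-subspace of $M$ with quotient isomorphic to $M_2$; choosing an $F$-basis of $M$ adapted to this subspace, and using that as $k$-Banach spaces $M\simeq M_1\oplus M_2$, I would write $\nabla=\begin{pmatrix}\nabla_1 & \mathbf H\\0&\nabla_2\end{pmatrix}$, where $\mathbf H\colon M_2\to M_1$ is multiplication by a fixed matrix over $F$, hence bounded. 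Since $M=M_1\oplus M_2$ is a finite direct sum, completed tensor product gives a block decomposition $\Lk{M}\ct_k\h{x}\simeq\bigoplus_{i,j}\Lk{M_j,M_i}\ct_k\h{x}$ under which the product is matrix multiplication, so $S=\begin{pmatrix}S_1&B\\0&S_2\end{pmatrix}$ with $B:=\mathbf H\ot1$ bounded. I also note that the Banach structure on each $M_i$ is canonical up to equivalence (all norms on a finite $F$-vector space are equivalent), so by Lemma~\ref{sec:berk-spectr-theory-1} invertibility of $S_i$ does not depend on how $M_i$ is realized inside $M$; this is what will let me combine information coming from the two different sequences.

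For the inclusion $\Sigma_\nabla\subseteq\Sigma_{\nabla_1}\cup\Sigma_{\nabla_2}$ only the first sequence is needed. If $x$ lies outside both $\Sigma_{\nabla_1}$ and $\Sigma_{\nabla_2}$, then $S_1$ and $S_2$ are invertible, and the explicit block inverse $\begin{pmatrix}S_1^{-1}&-S_1^{-1}BS_2^{-1}\\0&S_2^{-1}\end{pmatrix}$ lies in $\Lk{M}\ct_k\h{x}$ (this is exactly where boundedness of $B$ is used), so $S$ is invertible and $x\notin\Sigma_\nabla$.

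The reverse inclusion is the heart of the matter and is where \emph{both} sequences enter. Suppose $x\notin\Sigma_\nabla$, so that $S$ is invertible; writing $S^{-1}=\begin{pmatrix}P&R\\U&V\end{pmatrix}$ in block form, the identities $S^{-1}S=I$ and $SS^{-1}=I$ give, reading off the $(1,1)$ and $(2,2)$ blocks, $PS_1=I$ and $S_2V=I$; that is, $S_1$ admits a left inverse and $S_2$ a right inverse in the respective algebras $\Lk{M_1}\ct_k\h{x}$ and $\Lk{M_2}\ct_k\h{x}$. Applying the identical argument to the \emph{second} exact sequence—in which $M_2$ is the sub-object and $M_1$ the quotient, so $S=\begin{pmatrix}S_2&B'\\0&S_1\end{pmatrix}$ in a suitable basis—yields the complementary one-sided inverses: $S_2$ admits a left inverse and $S_1$ a right inverse. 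Since an element possessing both a left and a right inverse is invertible (the two necessarily coincide), $S_1$ and $S_2$ are both invertible, whence $x\notin\Sigma_{\nabla_1}\cup\Sigma_{\nabla_2}$, which is the contrapositive of the desired inclusion.

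The step I expect to be the main obstacle is precisely this last one. A single exact sequence only produces \emph{one-sided} inverses for the diagonal blocks: the off-diagonal term $B$ obstructs inverting $S$ blockwise from the data of one triangularization alone, and indeed the two hypotheses do not force $M$ to split as $M_1\oplus M_2$ (a nonsplit self-extension of a trivial module by itself already admits both types of sequence). It is the simultaneous availability of the \emph{two} complementary triangularizations that upgrades the one-sided inverses into genuine invertibility. A secondary point to verify carefully is that the left inverse arising from the first sequence and the right inverse arising from the second both lie in $\Lk{M_i}\ct_k\h{x}$ for one and the same Banach structure up to equivalence; this is guaranteed by the equivalence of norms on finite $F$-vector spaces together with Lemma~\ref{sec:berk-spectr-theory-1}.
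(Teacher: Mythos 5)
Your proof is correct and takes essentially the same route as the paper's: the paper's own proof is just the citation \cite[Proposition~3.7 and Remark~3.8]{Cons}, and the argument encapsulated there (and in the quoted Lemma~\ref{10}) is exactly your mechanism of splitting $M$ as a Banach space so that $\nabla\ot 1-1\ot T(x)$ becomes block upper triangular, then extracting one-sided inverses of the diagonal blocks from each of the two exact sequences and combining them. Your remark that a single sequence yields only one-sided invertibility—so that both sequences are genuinely needed, since the hypotheses do not force $M$ to split—is precisely the right point, and your handling of the change of Banach structure via equivalence of norms is sound.
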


 \begin{proof}
   See \cite[Proposition~3.7]{Cons} and \cite[Remark 3.8]{Cons}.
 \end{proof}

  \begin{cor}\label{sec:spectr-diff-modul}
  Let $(M,\nabla)$, $(M_1,\nabla_1)$  and $(M_2,\nabla_2)$ be three
   differential modules over $(F,d)$. If we suppose that
   $(M,\nabla)=(M_1,\nabla_1)\oplus (M_2,\nabla_2)$, then we have $\Sigma_\nabla(\Lk{M})=\Sigma_{\nabla_1}(\Lk{M_1})\cup \Sigma_{\nabla_2}(\Lk{M_2})$.
 \end{cor}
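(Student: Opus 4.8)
The plan is to deduce this directly from the preceding Proposition by exhibiting the two short exact sequences of differential modules that its hypothesis demands. From the decomposition $M=M_1\oplus M_2$ with diagonal connection $\nabla=\nabla_1\oplus\nabla_2$, I would use the canonical inclusions $\iota_j\colon M_j\to M$ and projections $\pi_j\colon M\to M_j$ to form
\[0\to (M_1,\nabla_1)\xrightarrow{\iota_1}(M,\nabla)\xrightarrow{\pi_2}(M_2,\nabla_2)\to 0\]
and, symmetrically,
\[0\to (M_2,\nabla_2)\xrightarrow{\iota_2}(M,\nabla)\xrightarrow{\pi_1}(M_1,\nabla_1)\to 0.\]

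The single point I would need to check is that $\iota_1,\iota_2,\pi_1,\pi_2$ are morphisms of differential modules. Their $k$-linearity and the exactness of the sequences are immediate from the direct sum structure, and their commutation with the connections is forced by the diagonal form $\nabla=\nabla_1\oplus\nabla_2$: applying $\nabla$ to an element of $M_j$ stays in $M_j$ and agrees with $\nabla_j$ there, while projecting $\nabla(m_1,m_2)=(\nabla_1 m_1,\nabla_2 m_2)$ onto a factor recovers $\nabla_j$ composed with the projection. These verifications are routine.

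With both exact sequences in hand, I would invoke the preceding Proposition verbatim to conclude
\[\Sigma_\nabla(\Lk{M})=\Sigma_{\nabla_1}(\Lk{M_1})\cup\Sigma_{\nabla_2}(\Lk{M_2}).\]
I do not anticipate any real obstacle: all of the analytic content, relating non-invertibility over $E\Ct{k}\Hx$ to the factors, is already packaged in the Proposition, and the role of the corollary is only to observe that a direct sum supplies, at no cost, the two split exact sequences running in opposite directions that the Proposition requires.
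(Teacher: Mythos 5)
Your proposal is correct and matches the paper's (implicit) argument exactly: the corollary is stated as an immediate consequence of the preceding Proposition, precisely because a direct sum decomposition furnishes the two split short exact sequences via the canonical inclusions and projections, which commute with the connections since $\nabla=\nabla_1\oplus\nabla_2$ acts diagonally. Your write-up simply makes explicit the routine verifications the paper leaves to the reader.
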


 \begin{cor}\label{sec:spectr-diff-modul-1}
   Let $f\in F$. Consider the differential module $(M,\nabla):= (\DD_F/\DD_F.(D-f)^n,D)$. Then
   we have:
   \[\Sigma_\nabla(\Lk{M})=\Sigma_{S\d+f}(\Lk{F}).\]
 \end{cor}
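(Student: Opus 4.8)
The plan is to prove the equality by induction on $n$, realizing $(M,\nabla)=(\DD_F/\DD_F.(D-f)^n,D)$ as an iterated extension of copies of the rank-one module $(\DD_F/\DD_F.(D-f),D)$, and then applying the behaviour of the spectrum under exact sequences. For the base case $n=1$, the module $M=\DD_F/\DD_F.(D-f)$ is free of rank one on the class $\overline{1}$, and since $D\equiv f$ modulo $\DD_F.(D-f)$ we get $D\cdot\overline{1}=f\,\overline{1}$, whence $\nabla(g\,\overline{1})=(dg+fg)\,\overline{1}$. Identifying $M$ with $F$ via $\overline{1}$, the connection becomes the operator $d+f$, where $d$ is the derivation of $F$ extending $S\d$; this is exactly the element denoted $S\d+f$ in the statement, so $\Sigma_\nabla(\Lk{M})=\Sigma_{S\d+f}(\Lk{F})$.

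For the inductive step I would set $u:=D-f\in\DD_F$ and construct the two short exact sequences demanded by the exact-sequence Proposition (\cite[Proposition~3.7, Remark~3.8]{Cons}). The first is the canonical one,
\[0\to \DD_F.u^{n-1}/\DD_F.u^{n}\to \DD_F/\DD_F.u^{n}\to \DD_F/\DD_F.u^{n-1}\to 0,\]
a sequence of differential modules because the quotient maps commute with left multiplication by $D$. Its submodule is generated by $\overline{u^{n-1}}$ with $D\cdot\overline{u^{n-1}}=f\,\overline{u^{n-1}}$, hence isomorphic to $(\DD_F/\DD_F.u,D)\simeq(F,d+f)$, of spectrum $\Sigma_{S\d+f}(\Lk{F})$; the quotient $\DD_F/\DD_F.u^{n-1}$ has spectrum $\Sigma_{S\d+f}(\Lk{F})$ by the induction hypothesis.

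The key point is the \emph{reversed} sequence, in which the rank-$(n-1)$ piece must appear as a \emph{submodule} and $(F,d+f)$ as a quotient. Here I would use right multiplication by $u$: the map $r:\DD_F\to\DD_F$, $X\mapsto Xu$, is a morphism of left $\DD_F$-modules (left and right multiplications commute), it is injective since $\DD_F$ is a domain, and it carries $\DD_F.u^{n-1}$ onto $\DD_F.u^{n}$. Thus $r$ induces an injection of differential modules $\DD_F/\DD_F.u^{n-1}\hookrightarrow\DD_F/\DD_F.u^{n}$ with image $\DD_F.u/\DD_F.u^{n}$ and quotient $\DD_F/\DD_F.u\simeq(F,d+f)$, yielding
\[0\to \DD_F/\DD_F.u^{n-1}\to \DD_F/\DD_F.u^{n}\to (F,d+f)\to 0.\]
Taking $(M_1,\nabla_1):=(F,d+f)$ and $(M_2,\nabla_2):=(\DD_F/\DD_F.u^{n-1},D)$, the two displays are precisely the two exact sequences required, so the Proposition gives
\[\Sigma_\nabla(\Lk{M})=\Sigma_{\nabla_1}(\Lk{M_1})\cup\Sigma_{\nabla_2}(\Lk{M_2})=\Sigma_{S\d+f}(\Lk{F})\cup\Sigma_{S\d+f}(\Lk{F})=\Sigma_{S\d+f}(\Lk{F}),\]
using the base case and the induction hypothesis. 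I expect the construction of this reversed sequence to be the main obstacle: one genuinely needs to exhibit the rank-$(n-1)$ summand as a submodule rather than only as a quotient, which forces the use of right multiplication by $u$ and the verification that it is an injective left $\DD_F$-module endomorphism commuting with the connection.
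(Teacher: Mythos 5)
Your proof is correct and takes essentially the approach the paper intends: the corollary is stated there as an immediate consequence of the two-exact-sequences proposition, and your induction --- the canonical filtration sequence $0\to \DD_F.u^{n-1}/\DD_F.u^{n}\to \DD_F/\DD_F.u^{n}\to \DD_F/\DD_F.u^{n-1}\to 0$ together with the reversed sequence obtained from right multiplication by $u=D-f$ --- supplies exactly the details the paper leaves implicit. The base-case identification $(\DD_F/\DD_F.(D-f),D)\simeq(F,d+f)$ and the verification that right multiplication by $u$ is an injective morphism of left $\DD_F$-modules are both correct, so nothing needs to be fixed.
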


 \subsection{Spectrum of a differential module after ramified ground field extension}\label{sec:spectr-diff-module-4}
In this section we show how the spectrum of a differential module behaves after
ramified ground field extensions.

 Let $(F,d)$ be a finite differential extension of $(\KS,S\d)$. Let
$m\in \NN$ such that $F\simeq k(\!( S^{\fra{m}})\!)$. Now, if we set
$Z=S^{\frac{1}{m}}$, then we have $F=k\((Z\))$ and
$d=\frac{Z}{m}\D{Z}$. Note that we can see $(F,\frac{Z}{m}\dz)$ as a
differential module over $(\KS,S\d)$. In the basis $\{1,Z,\cdots,
Z^{m-1}\}$ we have:

\begin{equation}
\label{eq:18}
  \frac{Z}{m}\dz %
    \left(
     \raisebox{0.5\depth}{%
       \xymatrixcolsep{1ex}%
       \xymatrixrowsep{1ex}%
       \xymatrix{f_1\ar@{.}[dddd]\\ \\\\ \\f_{m}\\}%
     }%
   \right)
     =
\left(
     \raisebox{0.5\depth}{%
       \xymatrixcolsep{1ex}%
       \xymatrixrowsep{1ex}%
     \xymatrix{S\d
       f_1\ar@{.}[ddd]\\ \\ \\S\d f_{m}\\}%
}
   \right)
+
\left(
     \raisebox{0.5\depth}{%
       \xymatrixcolsep{1ex}%
       \xymatrixrowsep{1ex}%
   \xymatrix{0& 0\ar@{.}[rr]\ar@{.}[ddrr] &    &0\ar@{.}[dd]\\
                    0\ar@{.}[ddrr]\ar@{.}[dd]&\fra{m}\ar@{.}[ddrr]                 &    & \\
                      &                   &    &0\\
                    0\ar@{.}[rr]&                   &0  &\frac{m-1}{m}\\}
}
   \right)
   \left(
     \raisebox{0.5\depth}{%
       \xymatrixcolsep{1ex}%
       \xymatrixrowsep{1ex}%
       \xymatrix{f_1\ar@{.}[dddd]\\ \\\\ \\f_{m}\\}%
     }
   \right)
 \end{equation}
 We have a functor:
 \begin{equation}
   \label{eq:19}
   \Fonction{{I_F}^*:S\d-\text{\bf Mod}(\KS)}{\frac{Z}{m}\dz-\text{\bf Mod}(F)}{(M,\nabla)}{({I_F}^*M,{I_F}^*\nabla)}
 \end{equation}
 where ${I_F}^*M=M\ot_{\KS}F$ and the connection ${I_F}^*\nabla$ is defined as follows:
 \[{I_F}^*\nabla=\nabla\ot 1+1\ot \frac{Z}{m}\dz.\]
Let $(M,\nabla)$ be an object of $\diffk$ of rank $n$. If
$\{e_1,\dots,e_n\}$ is a basis of $M$ such that we have:
 \[\nabla \begin{pmatrix} f_1\\ \vdots \\ f_n \end{pmatrix}
		= \begin{pmatrix}S\d f_1\\ \vdots \\ S\d f_n \end{pmatrix}+G 
		\begin{pmatrix} f_1\\ \vdots \\ f_n \end{pmatrix},\]
with $G\in\cM(\KS)$, then $({I_F}^*M,{I_F}^*\nabla)$ is of rank $n$ and in the basis $\{e_1\ot 1,\dots, e_n\ot 1\}$
we have:
\begin{equation}
  \label{eq:20}
  {I_F}^*\nabla \begin{pmatrix} f_1\\ \vdots \\ f_n \end{pmatrix}
		= \begin{pmatrix}\frac{Z}{m}\dz f_1\\ \vdots \\ \frac{Z}{m}\dz f_n \end{pmatrix}+G 
		\begin{pmatrix} f_1\\ \vdots \\ f_n \end{pmatrix}.
              \end{equation}
 We have also the functor:
 \begin{equation}
\label{eq:21}
   \Fonction{I_{F*}: \frac{Z}{m}\dz-\text{\bf
     Mod}(F)}{ S\d-\text{\bf Mod}(\KS)} {(M,\nabla)}{(I_{F*}M,I_{F*}\nabla)}
\end{equation}
where $I_{F*} M$ is the restriction of scalars of $M$ via
$\KS\hookrightarrow F$, and $\nabla=I_{F*}\nabla$ are equal as
$k$-linear maps. If $(M,\nabla)$ has rank equal to $n$, the rank of
$(I_{F*}M,I_{F*}\nabla)$ is equal to $n.m$.

Let $(M,\nabla)$ be an object of $S\d-\text{\bf Mod}(\KS)$ of rank
$n$. The differential module
$(I_{F*}{I_F}^*M,I_{F*}{I_F}^*\nabla)$ has rank
$nm$. Let $\{e_1,\cdots, e_n\}$ be a basis of $(M,\nabla)$ and let $G$
be the associated matrix in this basis. Then the matrix associated to
$(I_{F*}{I_F}^*M,I_{F*}{I_F}^*\nabla)$ in the basis
$\{e_1\ot 1,\cdots ,e_n\ot 1, e_1\ot Z,\cdots, e_n\ot Z,\cdots, e_1\ot
Z^{m-1},\cdots, e_n\ot Z^{m-1}\}$ is:
\[\left(
     \raisebox{0.5\depth}{%
       \xymatrixcolsep{1ex}%
       \xymatrixrowsep{1ex}%
       \xymatrix{G& 0\ar@{.}[rr]\ar@{.}[ddrr] &    &0\ar@{.}[dd]\\
                    0\ar@{.}[ddrr]\ar@{.}[dd]&G+\fra{m} \cdot I_n\ar@{.}[ddrr]                 &    & \\
                      &                   &    &0\\
                    0\ar@{.}[rr]&                   &0
                    &G+\frac{m-1}{m}\cdot I_n\\}
}
\right).\]
Therefore we have the following isomorphism:

 \begin{equation}
\label{eq:28}
   (I_{F*}{I_F}^*M,I_{F*}{I_F}^*\nabla)\simeq\Bigoplus_{i=0}^{m-1}(M,\nabla+\frac{i}{m}).
 \end{equation}
 As $k$-Banach spaces $I_{F*}{I_F}^*M$ and ${I_F}^*M$ are the same, and
 $I_{F*}{I_F}^*\nabla$ as a $k$-linear map coincides with
 ${I_F}^*\nabla$.  Therefore, we have
 \begin{equation}
   \label{eq:23}
   \Sigma_{I_F^*\nabla,k}(\Lk{I_F^*M})=\Sigma_{I_{F*}I_F^*\nabla,k}(\Lk{I_{F*}I_F^*M}).
 \end{equation}
 By \cite[Proposition7.1.4]{Ber} and Corollary~\ref{sec:spectr-diff-modul} we have:
 \begin{equation}
   \label{eq:22}
   \Sigma_{{I_F}^*\nabla,k}(\Lk{{I_F}^*M})=\Bigcup_{i=0}^{m-1}\frac{i}{m}+\Sigma_{\nabla,k}(\Lk{M})\footnotemark.
 \end{equation}
 \footnotetext{This is the image of the spectrum by the polynomial
   function $\frac{i}{m}+T$.}

 \subsection{Newton polygon and the decomposition according to the slopes}
 Let $v:k\((S^{\fra{m}}\))\to \ZZ\cup\{\infty\}$ be the valuation map
 associated to the $S^{\fra{m}}$-adic valuation.
 
 Let $P=\Sum_{i=0}^{n}g_iD^i$ be an element of $\DD_{\KS}$. Let $L_P$ be
 the convex hull in $\RR^2$ of the set of points \[\{(i,v(g_i))|\;
 0\leq i\leq n\}\cup \{(0,\Min_{0\leq i\leq n} v(g_i))\}.\]
 
 \begin{Defi}[{\autocite[Definition 3.44]{vanga}}]
  The Newton polygon $\mathrm{NP}(P)$ of $P$ is the boundary of $L_P$. The
  finite slopes $\gamma_i$ of $P$ are called the slopes of $\mathrm{NP}(P)$. The horizontal width of the segment of $NP(P)$
  of slope $\gamma_i$ is called the
  multiplicity of $\gamma_i$.
\end{Defi}
\begin{Defi}
   A differential module $(M,\nabla)$ over $(\KS,S\d)$ is said to be
   regular singular if there exists a basis for which the associated matrix $G$
   has constant entries (i.e $G\in\cM_n(k)$).
                We will call the eigenvalues of such $G$ the exponents of
                $(M,\nabla)$. 
              \end{Defi}

 \begin{Pro}
  Let $(M,\nabla)$ be a differential module over $(\KS,S\d)$. The following
  properties are equivalent:
  \begin{itemize}
  \item $(M,\nabla)$ is regular singular;
  \item There exists a differential polynomial $P(D)$ with only one
    slope equal to $0$ such that $(M,\nabla)\simeq
    \DD_{\KS}/\DD_{\KS}.P(D)$;
  \item There exists $P(D)=g_0+g_1D+\cdots+g_{n-1}D^{n-1}+D^n$ with 
     $g_i\in \ks$, such that $(M,\nabla)\simeq (\DD_{\KS}/\DD_{\KS}.P(D),D)$;
  \item There exists $P(D)=g_0+g_1D+\cdots+g_{n-1}D^{n-1}+D^n$ with 
     $g_i\in k$, such that  $(M,\nabla)\simeq (\DD_{\KS}/\DD_{\KS}.P(D),D);$

  \end{itemize}
\end{Pro}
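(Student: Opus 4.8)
The plan is to prove the four conditions equivalent by establishing the cycle $(1)\Rightarrow(4)\Rightarrow(3)\Rightarrow(2)\Rightarrow(1)$. Two of these links are essentially formal. The implication $(4)\Rightarrow(3)$ is immediate, since $k\subset\ks$. For $(3)\Rightarrow(2)$ I would run the Newton polygon directly on a monic $P=g_0+\cdots+g_{n-1}D^{n-1}+D^n$ with all $g_i\in\ks$: then $v(g_i)\geq 0$ for every $i$ and $v(g_n)=0$, so $\Min_i v(g_i)=0$, the added vertex is $(0,0)$, and the lower boundary of $L_P$ is the horizontal segment joining $(0,0)$ to $(n,0)$. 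Hence $\mathrm{NP}(P)$ has the single slope $0$. Reading the same picture backwards shows that for a monic $P$ the condition ``single slope $0$'' forces $v(g_i)\geq 0$ for all $i$, i.e. $g_i\in\ks$; this is exactly what I will need to launch the hard direction, so in effect $(2)$ and $(3)$ are two formulations of the same hypothesis.

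For $(1)\Rightarrow(4)$ I would argue by separating the exponents using the gauge freedom afforded by the powers $S^m$. Write $M=F^n$ with $\nabla=S\d+G$, $G\in\cM_n(k)$. Since $k$ is algebraically closed of characteristic $0$, I put $G$ in Jordan form by a constant (hence $\nabla$-compatible) change of basis, which splits $(M,\nabla)$ into a direct sum of blocks $(M_i,\nabla_i)$. Each single Jordan block of eigenvalue $\lambda_i$ and size $s_i$ is non-derogatory, so it admits a constant cyclic vector and is isomorphic to $\DD_{\KS}/\DD_{\KS}.(D-\lambda_i)^{s_i}$. The key observation is that the multiplication-by-$S^{m}$ isomorphism sends $\DD_{\KS}/\DD_{\KS}.(D-\lambda)^{s}$ to $\DD_{\KS}/\DD_{\KS}.(D-\lambda-m)^{s}$ (on matrices it adds $m\cdot I_{s}$, because $S\d(S^m)=mS^m$); this is precisely the classical fact that regular singular exponents are defined only modulo $\ZZ$. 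As there are finitely many blocks, I may choose integers $m_i$ so that the shifted eigenvalues $\mu_i=\lambda_i+m_i$ become pairwise distinct. Then $(M,\nabla)\simeq\Bigoplus_i \DD_{\KS}/\DD_{\KS}.(D-\mu_i)^{s_i}$ with the factors $(D-\mu_i)^{s_i}$ pairwise coprime in the commutative subring $k[D]\subset\DD_{\KS}$, and the usual coprimality/cyclic-vector argument identifies this sum with $\DD_{\KS}/\DD_{\KS}.P$ for the constant-coefficient operator $P=\prod_i(D-\mu_i)^{s_i}$. This yields $(4)$.

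The substantial implication is $(2)\Rightarrow(1)$, the classical reduction of a regular singular connection to constant coefficients (Fuchs--Frobenius). Starting from $(M,\nabla)\simeq\DD_{\KS}/\DD_{\KS}.P$ with $\mathrm{NP}(P)$ of single slope $0$, the Newton polygon reading above shows that the companion matrix $A$ of $P$ lies in $\cM_n(\ks)$; write $A=A_0+A_1S+\cdots$ with $A_0\in\cM_n(k)$. I would seek a gauge transformation $Y$ with $Y^{-1}AY+Y^{-1}(S\d Y)$ constant, solving order by order for $Y=I+Y_1S+Y_2S^2+\cdots$: at order $m$ the equation takes the form $[A_0,Y_m]+mY_m=(\text{terms already determined})$, which is solvable precisely when $m$ is not a difference of two eigenvalues of $A_0$. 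The main obstacle is therefore the \emph{resonant} case, where eigenvalues of $A_0$ differ by nonzero integers; I would remove resonances by finitely many shearing transformations (gauges of the form $\mathrm{diag}(S^{d_1},\dots,S^{d_n})$ adapted to the eigenspaces), which shift the offending eigenvalues by integers and strictly decrease the maximal integer gap, until the non-resonant case is reached and the recursion converges in $\cM_n(\ks)$ to a constant matrix. This produces a basis of $M$ in which $\nabla=S\d+G'$ with $G'\in\cM_n(k)$, i.e. $(1)$, and closes the cycle. The characteristic-zero hypothesis is used throughout (to divide by the integers $m$ in the recursion and to invoke the cyclic vector theorem), while the algebraic closedness of $k$ is what makes the Jordan reduction and the exponent bookkeeping available.
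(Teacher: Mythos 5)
Your proof is correct, but it takes a genuinely different route from the paper, which offers no argument at all for this proposition: it simply defers to \cite[Corollary~7.1.3]{Ked} and \cite[Proposition~10.1]{christol}, i.e.\ to the classical formal Fuchs theory that you reconstruct. Your cycle $(1)\Rightarrow(4)\Rightarrow(3)\Rightarrow(2)\Rightarrow(1)$ is sound: the Newton-polygon reading of monic operators with coefficients in $\ks$, the Jordan-form argument with integer shifts of eigenvalues (exponents are only defined modulo $\ZZ$, via the gauge $S^m$) so that the resulting constant matrix is non-derogatory and a constant cyclic vector exhibits $M$ as $\DD_{\KS}/\DD_{\KS}.P$ with $P=\prod_i(D-\mu_i)^{s_i}\in k[D]$, and the Fuchs--Frobenius recursion $[A_0,Y_m]+mY_m=(\text{known terms})$ with shearing gauges to remove resonances. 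What your approach buys is a self-contained proof that makes visible exactly where algebraic closedness (Jordan reduction, exponent bookkeeping) and characteristic zero (division by the integers $m$) enter; what the paper's citation buys is rigor on the two points your sketch compresses. First, the shearing step: one must verify that after conjugating by a block gauge $\mathrm{diag}(I,S\cdot I)$ adapted to the generalized eigenspaces the transformed matrix still lies in $\cM_n(\ks)$ (this works precisely because the below-diagonal constant blocks vanish when $A_0$ is block-diagonalized), and that finitely many such steps terminate. Second, in $(3)\Leftrightarrow(2)$ your reading of the Newton polygon uses the standard convention (lower boundary, as in \cite{vanga}), which is the intended meaning of the paper's slightly loose phrase ``boundary of $L_P$''; with the literal full boundary, a monic $P$ with $v(g_0)>0$ would acquire spurious nonzero slopes. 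Neither point is a gap in substance, but both would need to be spelled out to make your argument a complete replacement for the cited references.
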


\begin{proof}
  See \cite[Corollary~7.1.3]{Ked} and \cite[Proposition~10.1]{christol}.
\end{proof}

\begin{Pro}[{\cite[Proposition~7.3.6]{Ked}}]\label{sec:spectr-regul-sing}
  Let $P(D)=g_0+g_1D+\cdots+g_{n-1}D^{n-1}+D^n$ such that
  $g_i\in\ks$. Then we have the isomorphism in $\diffk$:
  \[(\DD_{\KS}/\DD_{\KS}.P(D),D)\simeq (\DD_{\KS}/\DD_{\KS}.P_0(D)),\] where
  $P_0(D)=g_0(0)+g_1(0)D+\cdots g_{n-1}(0)D^{n-1}+D^n$. 
  
\end{Pro}

\begin{rem}\label{sec:newt-polyg-decomp-1}

  This proposition means in
  particular that for all $f\in \ks$ there exists
  $g\in\KS\setminus\{0\}$ such that $f-\frac{S\d(g)}{g}=f(0)$. Indeed,
  by Proposition~\ref{sec:spectr-regul-sing} we have $(\KS,
  S\d+f)\simeq (\KS,S\d+f(0))$. This is equivalente to saying that there
  exists $g\in \KS\setminus\{0\}$ such that
  \[g\-1\circ S\d\circ g+f=S\d+f(0).\] 
\end{rem}

\begin{Defi}
 Let $(M,\nabla)$ be an element of $\diffk$, and let
 $P(D)\in\DD_{\KS}$ such that $(M,\nabla)\simeq (\DD_{\KS}/\DD_{\KS}.P(D),D)$. If all the
 slopes of $P(D)$ are different from $0$, then we say that
 $(M,\nabla)$ is without regular part.  
\end{Defi}

\begin{Pro}\label{sec:newt-polyg-decomp}
  Let $(M,\nabla)$ be a differential module over $(\KS,S\d)$. Then we have decomposition in $\diffk$:
  \[ (M,\nabla) =(M_{\mr{reg}},\nabla_{\mr{reg}})\oplus
    (M_{\mr{irr}},\nabla_{\mr{irr}})\]
 where $(M_{\mr{reg}},\nabla_{\mr{reg}})$ is a regular singular
 differential module  and
 $(M_{\mr{irr}},\nabla_{\mr{irr}})$ is a
 differential module without regular part.
\end{Pro}

\begin{proof}
  See \cite[Proposition 12.1]{christol} and \cite[Theorem~3.48]{vanga}.
\end{proof}

\section{Spectrum of a differential module}\label{sec:spectr-diff-module-5}

In this section we compute the spectrum of a differential module
$(M,\nabla)$ over \\ $(\KS,S\d)$. The
main statement of the paper is the following.
\begin{Theo}\label{sec:spectr-diff-module-3}
  Let $r\in (0,1)$. Assume that $(\KS,|.|)\simeq (\h{x_{0,r}},|.|)$. Let $(M,\nabla)$ be a differential module over $(\KS,S\d)$. Let
  $\{\gamma_1,\cdots, \gamma_{\nu}\}$ be the set of slopes of
  $(M,\nabla)$ and let
  $\{a_1,\cdots, a_{\mu}\}$ be the set of exponents of the regular
  part of $(M,\nabla)$. Then the spectrum of $\nabla$ as an element of
  $\Lk{M}$ is:
  \[\Sigma_{\nabla,k}(\Lk{M})=\{x_{0,r^{-\gamma_1}},\cdots,
    x_{0,r^{-\gamma_{\nu}}}\}\cup \bigcup_{i=1}^{\mu}(a_i+\ZZ)\]
\end{Theo}

\begin{rem}
  We observe that although differential modules over $(\KS,S\d)$
  are algebraic objects, their spectra in the sense of Berkovich depends highly on the choice of
  the absolute value on $\KS$.  
\end{rem}

According to
Corollary~\ref{sec:newt-polyg-decomp} we have the decomposition:
\[ (M,\nabla) =(M_{\text{reg}},\nabla_{\text{reg}})\oplus
  (M_{\text{irr}},\nabla_{\text{irr}})\]
We know that $\Sigma_\nabla=\Sigma_{\nabla_{\text{reg}}}\cup
\Sigma_{\nabla_{\text{irr}}}$
(cf. Proposition~\ref{sec:spectr-diff-modul}). Therefore, in order to obtain the
main statement, it is enough to know the spectrum of a regular singular differential
module and the spectrum of differential
module without regular part.

\subsection{Spectrum of regular singular differential module}\label{sec:spectr-regul-sing-3}

Let $(M,\nabla)$ be a regular singular differential module. We point
out, since our work in \cite{Cons},  that the difficulty of the
computation of the spectrum of $\nabla$ is reduced to the computation of $\Sigma_{S\d,k}(\Lk{\KS})$. This is due, more precisely, to the following
Proposition.
\begin{Pro}[{\cite[Proposition~3.15]{Cons}}]\label{sec:spectr-regul-sing-1}
         Let $(M,\nabla)$ be a differential module over $(\KS,S\d)$
                such that:
		\[\nabla \begin{pmatrix} f_1\\ \vdots \\ f_n \end{pmatrix}
		= \begin{pmatrix} df_1\\ \vdots \\ df_n \end{pmatrix}+G 
		\begin{pmatrix} f_1\\ \vdots \\ f_n \end{pmatrix},\]
		with $G\in \cM_n(k)$. The spectrum of $\nabla$ is
                $\Sigma_{\nabla,k}(\Lk{M})=\Bigcup_{i=1}^{N}(a_i+\Sigma_{S\d,k}(\Lk{\KS}))$, where
                $\{a_1,\dots, a_N\}$ are  the eigenvalues of $G$.
               
              \end{Pro}%
The
behaviour of the spectrum of $\nabla$ is recapitulated in the
following theorem.
              
\begin{Theo}\label{sec:spectr-diff-module}
  Let $(M,\nabla)$ be a regular singular differential module over
  $(\KS,S\d)$. Let $G$ a matrix associated to $\nabla$ with constant
  entries (i.e. $G\in\cM_\nu(k)$), and let $\{a_1,\cdots, a_N\}$ be
the set of eigenvalues of $G$. The spectrum of $\nabla$ is
                \[\Sigma_{\nabla,k}(\Lk{M})=\Bigcup_{i=1}^{N}(a_i+\ZZ)\cup\{x_{0,1}\}.\]
               
              \end{Theo}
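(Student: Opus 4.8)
The plan is to reduce the computation to the rank-one (scalar) case and then analyze the spectrum of $S\d$ acting on $\KS$ directly. First I would diagonalize: since $k$ is algebraically closed of characteristic $0$, I can put $G$ in Jordan form over $k$, so that $(M,\nabla)$ decomposes (as a direct sum of blocks) into differential modules of the shape $(\DD_{\KS}/\DD_{\KS}.(D-a_i)^{n_i},D)$ for each eigenvalue $a_i$. By Corollary~\ref{sec:spectr-diff-modul} the spectrum of the whole module is the union of the spectra of these blocks, so it suffices to treat a single Jordan block. By Corollary~\ref{sec:spectr-diff-modul-1} the spectrum of such a block equals $\Sigma_{S\d+a_i}(\Lk{\KS})$, i.e. the spectrum of the scalar operator $S\d + a_i$ acting on $\KS=\h{x_{0,r}}$.

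Next I would reduce the shift by $a_i$ to the untranslated operator using Lemma~\ref{sec:berk-spectr-theory}: since $S\d + a_i = P(S\d)$ for the polynomial $P(T)=T+a_i$, we have
\begin{equation}
\Sigma_{S\d+a_i}(\Lk{\KS}) = a_i + \Sigma_{S\d}(\Lk{\KS}).
\end{equation}
Therefore the whole theorem comes down to the single computation
\[
\Sigma_{S\d}(\Lk{\KS}) = \ZZ \cup \{x_{0,1}\},
\]
after which taking the union over eigenvalues and translating by each $a_i$ yields $\bigcup_{i=1}^N (a_i+\ZZ)\cup\{x_{0,1}\}$, as claimed (the Gauss point $x_{0,1}$ being fixed by translation by $k$-rationals, and the integer point $0$ in each $\ZZ$ being absorbed).

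The heart of the argument, and the main obstacle, is thus to compute $\Sigma_{S\d}(\Lk{\KS})$ by hand. I would proceed in two parts. For a point $x_{c,\rho}\in\Ak$, I must decide whether $S\d - T(x_{c,\rho})$ is invertible in $\Lk{\KS}\Ct{k}\h{x_{c,\rho}}$. On the eigenbasis $\{S^n\}_{n\in\ZZ}$ the operator $S\d$ is diagonal with eigenvalue $n$ on $S^n$, so $S\d - \lambda$ acts as multiplication by $(n-\lambda)$ in the $n$-th component. When $\lambda=a\in k$ is an integer the operator has a kernel (hence is not invertible), giving $\ZZ\subset\Sigma_{S\d}$; when $\lambda\notin\ZZ$ the formal inverse multiplies the $n$-th coefficient by $(n-\lambda)^{-1}$, and I must check whether this inverse is \emph{bounded} for the $r$-adic norm. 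Because $k$ is trivially valued, $|n-\lambda|$ takes only finitely many values as $n$ ranges over $\ZZ$ (it depends only on whether $n-\lambda$ is a unit, i.e. has trivial valuation in $\h{x}$), so the inverse is bounded precisely when $\inf_n |n-\lambda|_x > 0$. This $\inf$ is positive exactly away from the points where some $n-\lambda$ fails to be invertible — I expect the delicate point to be analyzing the Gauss point $x_{0,1}$ (where $\h{x_{0,1}}=k(T)$ with trivial norm and $T-n$ is a non-unit for every $n$, so infinitely many factors become singular simultaneously, forcing $x_{0,1}\in\Sigma$) and, more generally, ruling out all points $x_{c,\rho}$ with $\rho\neq 0,1$. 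For the latter I would use the description of $\h{x_{c,\rho}}$ from the preliminaries together with Lemma~\ref{sec:defin-basic-propr-1} and part (2) of Theorem~\ref{sec:defin-basic-propr} (the spectral radius bound) to confine the spectrum to the segment $[x_{0,0},x_{0,1}]$ and to its $k$-rational points, finally pinning it down to $\ZZ\cup\{x_{0,1}\}$. Carefully justifying that no point other than the integers and the Gauss point survives — i.e. that boundedness of the resolvent fails exactly on this set — is where the real work lies.
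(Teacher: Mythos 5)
Your reduction is correct and is essentially the paper's: the Jordan-form splitting plus Corollaries~\ref{sec:spectr-diff-modul} and \ref{sec:spectr-diff-modul-1}, followed by the translation Lemma~\ref{sec:berk-spectr-theory}, is exactly what the paper gets in one step from Proposition~\ref{sec:spectr-regul-sing-1}, so everything hinges on proving $\Sigma_{S\d}(\Lk{\KS})=\ZZ\cup\{x_{0,1}\}$ (Proposition~\ref{sec:spectr-line-diff-1}). It is in that computation that your plan has a genuine gap: you decide membership of a point $x$ in the spectrum by asking whether the diagonal formal inverse is bounded as an operator on $\KS\ct_k\h{x}$, but the Berkovich spectrum requires invertibility in the Banach algebra $\Lk{\KS}\ct_k\h{x}$, and the canonical morphism $\Lk{\KS}\ct_k\h{x}\to \LL{\h{x}}{\KS\ct_k\h{x}}$ is neither surjective nor bi-bounded, so these are not equivalent. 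The Gauss point shows your criterion is actually unsound: $\h{x_{0,1}}=k(T)$ carries the \emph{trivial} absolute value, so every $T-n$ is a unit of norm $1$ (your assertion that it is a non-unit is false), and the diagonal inverse $S^n\ot h\mapsto S^n\ot(n-T)^{-1}h$ is a bounded, indeed isometric, operator on $\KS\ct_k k(T)$; your test $\inf_n|n-T|_{x_{0,1}}=1>0$ would therefore place $x_{0,1}$ \emph{outside} the spectrum, which contradicts the statement you are proving. The correct reason why $x_{0,1}\in\Sigma_{S\d}$ is soft and does not involve checking invertibility there at all: the spectrum is compact, it contains $\ZZ$ (your kernel argument, which is fine, since for a rigid point $a\in k$ one has $\h{a}=k$ and invertibility in $\Lk{\KS}$ is indeed the right criterion), and over a trivially valued field any infinite set of rigid points accumulates at the Gauss point, so closedness forces $x_{0,1}\in\Sigma_{S\d}$.

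The same defect undermines your exclusion of the remaining points. Recall that over trivially valued $k$ the unit disk is the infinite broom $\coprod_{a\in k}[a,x_{0,1})\cup\{x_{0,1}\}$, so your stated aim to ``confine the spectrum to the segment $[x_{0,0},x_{0,1}]$ and its $k$-rational points'' cannot be right: the answer $\ZZ\cup\{x_{0,1}\}$ is not contained in a single segment. What must be shown is that each branch $[a,x_{0,1})$ with $a\in k\setminus\ZZ$ misses the spectrum, and that each punctured branch $(n,x_{0,1})$ with $n\in\ZZ$ misses it. The first part follows your lines correctly: $(S\d-a)^{-1}$ exists in $\Lk{\KS}$ with $\nsp{(S\d-a)^{-1}}=1$, and Lemma~\ref{sec:defin-basic-propr-1} converts this into $\diso{a}{1}\cap\Sigma_{S\d}=\emptyset$, where $\diso{a}{1}=[a,x_{0,1})$. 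But for the branches over the integers you cannot center a disk at $n$ (since $n\in\Sigma_{S\d}$), and your diagonal-boundedness criterion cannot be invoked for the non-rigid points of $(n,x_{0,1})$ for the reason explained above. The paper's device here, for which your proposal has no substitute, is the $(S\d-n)$-stable splitting $\KS=k\cdot S^n\oplus\widehat{\bigoplus}_{i\in\ZZ\setminus\{n\}}k\cdot S^i$: the restriction of $S\d-n$ to the second summand is an isometric bijection, so Lemma~\ref{10} combined with Lemma~\ref{sec:defin-basic-propr-1} yields $\diso{n}{1}\cap\Sigma_{S\d-n}=\{0\}$, hence $(n,x_{0,1})\subset\Ak\setminus\Sigma_{S\d}$ after translating by Lemma~\ref{sec:berk-spectr-theory}. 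Without this step (or an equivalent), the proof is incomplete precisely at the point you yourself identify as ``where the real work lies.''
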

              \begin{Lem}[{\cite[Proposition~3.12]{vanga}}]
                There exists a basis for which the set of the eigenvalues
                $\{a_1,\cdots, a_N\}$ of $G$ satisfies $a_i-a_j\not \in\ZZ$ for each $i\ne j$.
              \end{Lem}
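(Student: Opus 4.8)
The plan is to exploit the shearing transformations built into the derivation $S\d$, combined with the invariance of the spectrum under bi-bounded isomorphisms of differential modules. The only computation one ever needs is $S\d(S^m)=mS^m$ for $m\in\ZZ$: rescaling a basis vector by $S^m$ therefore shifts its diagonal contribution to the connection matrix by the integer $m$, while preserving the constancy of that matrix. So the strategy is to choose, among the constant matrices representing the same module, one whose distinct eigenvalues fall into distinct cosets of $k/\ZZ$.

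First I would use that $k$ is algebraically closed to bring $G\in\cM_\nu(k)$ to Jordan form by a constant gauge $Q\in\GL_\nu(k)$. Since this is a base change over $k\subset\KS$ it does not alter the module $(M,\nabla)$ and keeps the matrix constant. I would then group the Jordan blocks according to the class of their eigenvalue in $k/\ZZ$, fixing once and for all a representative $b_l$ of each coset that occurs; within a packet the eigenvalues differ from $b_l$ by integers.

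Next I would apply a shearing $P=\mathrm{diag}(S^{m_1}I,\dots,S^{m_t}I)\in\GL_\nu(\KS)$, with one scalar block $S^{m_l}I$ for each Jordan block of $G$, choosing the integer $m_l$ so that the eigenvalue of that block is translated onto the fixed representative of its coset. Using the gauge formula $G'=P^{-1}GP+P^{-1}(S\d)(P)$ together with $S\d(S^{m_l})=m_lS^{m_l}$, one gets $P^{-1}GP=G$ and $P^{-1}(S\d)(P)=\mathrm{diag}(m_1I,\dots,m_tI)$, so $G'$ is once more a constant matrix; its distinct eigenvalues are exactly the chosen representatives, which lie in distinct cosets, i.e. $b_i-b_j\notin\ZZ$ for $i\ne j$.

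Finally, the composite $PQ$ belongs to $\GL_\nu(\KS)$ and hence realizes an isomorphism of differential modules over $(\KS,S\d)$; being an $F$-linear isomorphism of a finite-dimensional space it is automatically bi-bounded by the equivalence of norms, so by Lemma~\ref{sec:berk-spectr-theory-1} the spectrum of $\nabla$ is the same whether computed from $G$ or from $G'$. This justifies assuming $a_i-a_j\notin\ZZ$ in Theorem~\ref{sec:spectr-diff-module}. In rank one this is precisely the isomorphism $(\KS,S\d+a)\simeq(\KS,S\d+a')$ for $a-a'\in\ZZ$, realized by $g=S^{a-a'}$ (compare Remark~\ref{sec:newt-polyg-decomp-1}). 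The one point I expect to require care is the block-by-block choice of the shifts $m_l$: one must check that shearing a single Jordan block $b_lI+N$ by $S^{m_l}I$ returns $(b_l+m_l)I+N$ and leaves all off-diagonal blocks zero — both immediate from the diagonal shape of $P$ — so that no spurious $S$-dependence is introduced into $G'$.
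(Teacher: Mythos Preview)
Your argument is correct: the shearing gauge $P=\mathrm{diag}(S^{m_1}I,\dots)$ applied after putting $G$ in Jordan form is exactly the standard mechanism for normalizing the eigenvalues modulo $\ZZ$, and your check that $P^{-1}GP=G$ (because each $S^{m_l}I$ is scalar on its Jordan block and $G$ is block-diagonal) together with $P^{-1}(S\d)(P)=\mathrm{diag}(m_1I,\dots)$ is the whole computation. Note, however, that the paper does not actually give a proof here: the lemma is stated with a bare citation to \cite[Proposition~3.12]{vanga} and no argument, so there is nothing in the paper to compare against. What you have written is essentially the content of that cited proposition, and your remark tying the rank-one case to Remark~\ref{sec:newt-polyg-decomp-1} is apt. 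One cosmetic point: you use the index $l$ first for cosets and then for Jordan blocks; it would read more cleanly to index Jordan blocks by $i$ and write the shift as $m_i$ chosen so that $a_i+m_i$ equals the fixed representative of the coset $a_i+\ZZ$.
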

              We now compute the spectrum of $S\d$.

              \begin{Lem}\label{sec:spectr-regul-sing-2}
  The norm and spectral semi-norm of
  $S\d$ as an element of $\Lk{\KS}$ satisfy:
    \[\nor{S\d}=1, \qq \nsp{S\d}=1.\]
  \end{Lem}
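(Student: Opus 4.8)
The plan is to compute the operator norm and spectral semi-norm of $S\dT$ acting on $\KS$ directly, exploiting the fact that $S\dT$ acts diagonally on the monomial basis $\{S^i\}_{i\in\ZZ}$. First I would observe that for each $i\in\ZZ$ we have $S\dT(S^i)=i\,S^i$, so $S\dT$ is a diagonal operator whose eigenvalue on the monomial $S^i$ is the integer $i$. Since the absolute value on $\KS$ is the $S$-adic one with $|S^i|=r^i$, and an arbitrary element $f=\sum_{i\geq N}a_iS^i$ has norm $\nor{f}=\max_i|a_i|r^i$, the key point is that the scalars $i\in\ZZ\subset k$ all have trivial absolute value $|i|=1$ (recall $k$ is trivially valued of characteristic $0$, so every nonzero integer is a unit of absolute value $1$).

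Given this, I would compute $\nor{S\dT(f)}=\nor{\sum_{i\geq N} i\,a_i S^i}=\max_i|i\,a_i|r^i=\max_i|i|\,|a_i|r^i$. Since $|i|\leq 1$ for all $i$ and $|i|=1$ whenever $i\neq 0$, this maximum is bounded above by $\max_i|a_i|r^i=\nor{f}$, giving $\nor{S\dT}_{\mathrm{op}}\leq 1$. For the reverse inequality I would simply test the operator on any monomial $S^i$ with $i\neq 0$: then $\nor{S\dT(S^i)}=|i|\,r^i=r^i=\nor{S^i}$, so the ratio equals $1$ and hence $\nor{S\dT}=1$.

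For the spectral semi-norm, the diagonal structure makes the iterates transparent: $(S\dT)^n(S^i)=i^n S^i$, so $\nor{(S\dT)^n}=\sup_{f\neq 0}\nor{(S\dT)^n f}/\nor{f}=\sup_{i\in\ZZ}|i^n|=\sup_{i\neq 0}|i|^n=1$, again because every nonzero integer has trivial absolute value. Therefore $\nsp{S\dT}=\lim_n\nor{(S\dT)^n}^{1/n}=\lim_n 1^{1/n}=1$, which gives the second equality. I expect no serious obstacle here; the only subtlety worth stating carefully is the use of the hypothesis that $k$ is trivially valued of characteristic $0$, which is exactly what forces $|i|=1$ for all nonzero integers and thereby keeps both norms pinned at $1$ rather than decaying. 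The whole argument is essentially a direct computation once the diagonalization on monomials is recorded.
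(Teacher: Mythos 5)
Your proof is correct, but it takes a genuinely different route from the paper's. You exploit the fact that $S\d$ acts diagonally on the monomials $S^i$ with integer eigenvalues: since $k$ is trivially valued of characteristic $0$, every nonzero integer of $k$ has absolute value $1$, and the ultrametric expression $\nor{\sum_i a_iS^i}=\max_i|a_i|r^i$ then gives $\nor{(S\d)^n}_{\mathrm{op}}=\sup_{i\in\ZZ}|i|^n=1$ for every $n\geq 1$ (upper bound coefficientwise, lower bound by testing on a single monomial $S^i$ with $i\neq 0$), so both the operator norm and the spectral semi-norm are pinned at $1$ in one computation. The paper instead obtains the upper bound from submultiplicativity, $\nor{S\d}\leq\nor{S}\cdot\nor{\d}=r\cdot r^{-1}=1$ (citing an external lemma for $\nor{\d}=1/r$), and for the lower bound argues structurally: conjugation by $S$ is a bi-bounded automorphism of $\Lk{\KS}$ carrying $S\d$ to $S\d+1$, hence $\nsp{S\d}=\nsp{S\d+1}$, and the ultrametric inequality for the spectral semi-norm of commuting elements yields $1=\nsp{1}=\nsp{(S\d+1)-S\d}\leq\max(\nsp{S\d+1},\nsp{S\d})=\nsp{S\d}$. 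Your computation is more elementary and self-contained, and it proves slightly more (every power of $S\d$ has operator norm exactly $1$, not merely that the limit of the roots is $1$); the paper's argument, by contrast, never needs the operator to be diagonal in an orthogonal basis, and its key mechanism --- invariance of spectral quantities under the shift $\nabla\mapsto\nabla+1$ induced by conjugation by $S$ --- is exactly the mechanism that reappears later in the paper in the $\ZZ$-translation structure of the spectrum. One cosmetic remark: you wrote the operator as $S\dT$ (derivative in $T$) where the statement concerns $S\d$ (derivative in $S$); this is plainly a slip of notation and does not affect the argument.
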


  \begin{proof}
    Since $\nor{S}=|S|=r$ and $\nor{\d}=\fra{r}$ (cf. \cite[Lemma~4.4.1]{and}),
    we have $\nor{S\d}\leq 1$. Hence also, $\nsp{S\d}\leq 1$. The map
    \[\Fonction{\Lk{\Hx}}{\Lk{\Hx}}{\phi}{S\-1\circ\phi\circ S}\]
    is bi-bounded and induces change of basis. Therefore, 
    as
    $S\-1\circ(S\d)\circ S=S\d+1$, we have $\nsp{S\d}=\nsp{S\d+1}$. Since $1$
    commutes with $S\d$, we have:
    \[1=\nsp{1}=\nsp{S\d+1-S\d}\leq\max(\nsp{S\d+1},\nsp{S\d}).\]
  Consequently, we obtain
    
    \[\nor{S\d}=\nsp{S\d}=1.\]   
  \end{proof}
  \begin{Pro}\label{sec:spectr-line-diff-1}
  The spectrum of $S\d$ as an element of $\Lk{k\((  S\))}$ is equal to:
  \[\Sigma_{S\d}(\Lk{k\(( S\)) })=\ZZ\cup \{x_{0,1}\}\]
\end{Pro}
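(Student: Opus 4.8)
The plan is to compute $\Sigma_{S\d}(\Lk{\KS})$ directly from the definition of the Berkovich spectrum by deciding, for each point $x\in\Ak$, whether the operator $S\d\ot 1-1\ot T(x)$ is invertible on $\KS\Ct{k}\Hx$. The key computational fact is that $S\d$ acts diagonally on the basis $\{S^i\}_{i\in\ZZ}$ of $\KS$ with eigenvalue $i$ on $S^i$, since $S\d(S^i)=iS^i$. So after scalar extension the operator $S\d\ot1-1\ot\lambda$ (writing $\lambda=T(x)$) sends $S^i\ot1$ to $(i-\lambda)S^i\ot1$. Formally inverting it amounts to dividing the $i$-th coefficient by $i-\lambda$, and the whole question becomes whether this formal inverse is a \emph{bounded} operator on the completed tensor product.

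First I would settle the classical points $\lambda\in k$. If $\lambda=n\in\ZZ$, then $i-\lambda=0$ for $i=n$, so $S\d-n$ has $S^n$ in its kernel and is not invertible; hence $\ZZ\subset\Sigma_{S\d}$. If $\lambda\in k\setminus\ZZ$, then all $i-\lambda$ are nonzero, and because $k$ is trivially valued we have $|i-\lambda|=1$ for every $i$; the formal inverse multiplies coefficient $a_i$ by $(i-\lambda)\-1$, which preserves the $S$-adic norm, so it is bounded and $S\d-\lambda$ is invertible. Thus $(\Sigma_{S\d}\cap k)=\ZZ$, matching the first remark after the definition of the spectrum. The more interesting part is the point $x_{0,1}$, which corresponds to $\Hx=k(T)$ with the trivial absolute value. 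Here I would exhibit a sequence witnessing non-invertibility: the resolvent, if it existed and were bounded, would have to have operator norm controlled by $\Nsp{}{(S\d-T)\-1}$, but the eigenvalue structure forces arbitrarily small denominators $i-T$ in the $k(T)$-norm as $i$ ranges over $\ZZ$, so the inverse is unbounded. Concretely I would compute $\Nsp{}{(S\d-a)\-1}$ for $a\in k\setminus\ZZ$ near an integer and invoke Lemma~\ref{sec:defin-basic-propr-1}: the radius of the largest $\lambda$-disk in the complement is $\Nsp{}{(S\d-a)\-1}\-1$, and I expect these radii to force $x_{0,1}$ into the spectrum by compactness and the disk-structure described in \eqref{eq:3}--\eqref{eq:5}.

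To assemble the global picture, I would argue that the spectrum is contained in the closed disk of radius $\Nsp{}{S\d}=1$ centered at $0$ by Theorem~\ref{sec:defin-basic-propr}(2) together with Lemma~\ref{sec:spectr-regul-sing-2}, so every point of $\Sigma_{S\d}$ lies on the segments $[a,x_{0,1}]$ for $a\in k$ or beyond $x_{0,1}$ only up to radius $1$. Combined with $\Sigma_{S\d}\cap k=\ZZ$ and the fact that the only point of radius $1$ reachable from integers along the tree is $x_{0,1}$, this pins the spectrum down to $\ZZ\cup\{x_{0,1}\}$. One must also check there are no extra points of positive radius $r(x)<1$ sitting over a non-integer $a\in k$; this follows from the invertibility computation at classical non-integer points, since a point $x_{a,\rho}$ with $0<\rho<1$ lying over such $a$ would give a bounded resolvent by the same norm-preservation argument.

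The main obstacle I anticipate is the analysis at $x_{0,1}$ itself: unlike the classical points, here one works over $k(T)$ with the trivial valuation, and one must carefully show that inverting $S\d\ot1-1\ot T$ coefficientwise produces an \emph{unbounded} operator on $\KS\Ct{k}k(T)$ rather than merely a non-obvious one. The subtlety is that the denominators $i-T\in k(T)$ all have trivial-valuation norm equal to $1$, so naive coefficientwise bounds do not immediately blow up; the unboundedness must come from the interaction between the $S$-adic norm on $\KS$ and the tensor-norm on the completion, where the family $\{(i-T)\-1\}_{i\in\ZZ}$ fails to be uniformly bounded in the relevant sense. I would expect to handle this either by a direct sequence of test elements showing the ratio $\nor{(S\d-T)\-1 m}/\nor{m}$ is unbounded, or by using Lemma~\ref{sec:defin-basic-propr-1} to compute the radii of the complementary disks around integer points and showing they accumulate at $x_{0,1}$, forcing it into the (compact) spectrum.
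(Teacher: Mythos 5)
You are testing the wrong notion of invertibility, and this breaks your argument precisely at the point $x_{0,1}$, where all the subtlety of the statement lies. The spectrum is defined by invertibility of $S\d\ot 1-1\ot T(x)$ \emph{in the Banach algebra} $\Lk{\KS}\Ct{k}\Hx$, not by invertibility of the induced operator \emph{on the module} $\KS\Ct{k}\Hx$. The two notions agree at classical points (where $\Hx=k$), so your treatment of $\ZZ$ (kernel $k\cdot S^n$) and of $a\in k\setminus\ZZ$ (isometric coefficientwise inverse, $\nsp{(S\d-a)^{-1}}=1$, Lemma~\ref{sec:defin-basic-propr-1}) is correct and is the same as the paper's. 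They do \emph{not} agree at $x_{0,1}$. Indeed, by the identification used in the paper's own proof of Proposition~\ref{sec:spectr-line-diff-4}, $\KS\Ct{k}\h{x_{0,1}}=\fCouf{\h{x_{0,1}}}{0}{r}{r}$ is the ring of Laurent series $\sum_{i\in\ZZ}c_iS^i$ with $c_i\in k(T)$ trivially valued and norm $\max_i|c_i|r^i$; since $|i-T|=1$ for every $i\in\ZZ$, the coefficientwise inverse $c_i\mapsto(i-T)^{-1}c_i$ of $S\d\ot 1-1\ot T$ is an \emph{isometric} bijection. So the sequence of test elements you hope for does not exist: the module-level inverse is bounded, and the obstruction is solely that this diagonal inverse fails to lie in the subalgebra $\Lk{\KS}\Ct{k}k(T)$ (it would have to be $\sum_{i}P_i\ot(i-T)^{-1}$, with $P_i$ the projection onto $k\cdot S^i$, a sum that does not converge in the completed tensor product). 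Your own later remark that all denominators have norm $1$ already contradicts your earlier hope of "arbitrarily small denominators". The paper avoids any direct analysis at $x_{0,1}$: it notes $\ZZ\subset\Sigma_{S\d}$, that every neighborhood of $x_{0,1}$ contains all but finitely many integers (a nonzero polynomial has Gauss norm $1$ and only finitely many roots, the integers lying on pairwise distinct branches of $\disf{0}{1}$), and that the spectrum is compact (Theorem~\ref{sec:defin-basic-propr}); closedness then forces $x_{0,1}\in\Sigma_{S\d}$. Your fallback — Lemma~\ref{sec:defin-basic-propr-1} applied "around integer points" — is not applicable: that lemma concerns classical points of the \emph{complement} of the spectrum, and the integers belong to the spectrum.

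The second gap: you never rule out the points $x_{n,\rho}$, $0<\rho<1$, on the open segments $(n,x_{0,1})$ above the integers; you only treat branches above non-integer $a$. This is where the paper works hardest, and it cannot be handled by your resolvent argument, because $S\d-n$ has kernel $k\cdot S^n$, so $\nsp{(S\d-n)^{-1}}$ does not exist and Lemma~\ref{sec:defin-basic-propr-1} is unusable at $n$. The paper instead splits $\KS=k\cdot S^n\oplus\widehat{\bigoplus}_{i\neq n}k\cdot S^i$, both factors stable under $S\d-n$, applies Lemma~\ref{10} to get $\Sigma_{S\d-n}=\{0\}\cup\Sigma_{\nabla_2}$, shows the restriction $\nabla_2$ to the second factor is invertible with isometric inverse so that $\diso{0}{1}\cap\Sigma_{\nabla_2}=\emptyset$, and concludes by translation (Lemma~\ref{sec:berk-spectr-theory}) that $\diso{n}{1}\cap\Sigma_{S\d}=\{n\}$, i.e. $(n,x_{0,1})$ avoids the spectrum. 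Without a substitute for this step, the most your argument yields is $\ZZ\cup\{x_{0,1}\}\subset\Sigma_{S\d}\subset\bigcup_{n\in\ZZ}[n,x_{0,1}]$, which leaves open the possibility that entire segments above integers lie in the spectrum.
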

  
  \begin{proof}
    We set $d:=S\d$ and $\Sigma_{d-n}:=\Sigma_{d-n,k}(\Lk{\KS})$. As $\nsp{d}=1$
    (cf. Lemma~\ref{sec:spectr-regul-sing-2}), we have
    $\Sigma_d\subset\disf{0}{1}$.

    Let $a\in k\cap\disf{0}{1}$. If $a\in \ZZ$,  then we have
    $(d-a)(S^a)=0$. Hence, $d-a$ is not
    injective and $\ZZ\subset \Sigma_d$. As the spectrum is
    compact, we have $\ZZ\cup\{x_{0,1}\}\subset \Sigma_d$. If
    $a\not\in \ZZ$, then $d-a$ is invertible in $\Lk{\KS}$. Indeed, let
    $g(S)=\sum_{i\in \ZZ}b_i S^i\in \KS$. If there exists
    $f=\sum_{i\in \ZZ}a_i S^i\in \KS$ such that $(d-a)f=g$, then for
    each $i\in \ZZ$ we have
    \[a_i= \frac{b_i}{(i-a)}.\]
  For each
    $i\in\ZZ$ we have $|a_i|=|b_i|$. This means that $f$ it is unique
    and converges in $\KS$. We obtain also 
    $|f|= |g|$. Consequently, the set theoretical inverse
    $(d-a)^{-1}$ is bounded and $\nor{(d-a)\-1}=1$. Hence, we have
    $\nsp{(d-a)\-1}=1$. According to
    \cite[Lemma~2.20]{Cons}, we have $\diso{a}{1}\subset
    \Ak\setminus\Sigma_d$.

    Recall that $\disf{0}{1}=\bigcup_{a\in
      k}[a,x_{0,1}]$ (cf. \eqref{eq:4}). In order to end the proof, it is enough to show that
    $(n,x_{0,1})\subset\Ak\setminus \Sigma_d$ for all $n\in \ZZ$. Let
    $n\in \ZZ$. Then we have 

    \[\KS=k.S^n\oplus
      \widehat{\bigoplus}_{i\in\ZZ\setminus\{n\}}k.S^i.\]
The operator $(d-n)$
      stabilises both $k.S^n$ and
      $\widehat{\bigoplus}_{i\in\ZZ\setminus\{n\}}k.S^i$. We set
      $(d-n)|_{k.S^n}=\nabla_1$ and
      $(d-n)|_{\widehat{\bigoplus}_{i\in\ZZ\setminus\{n\}}k.S^i}=\nabla_2$. We
      set  $\Sigma_{\nabla_1}:=\Sigma_{\nabla_1,k}(\Lk{k.S^n})$ and
      $\Sigma_{\nabla_2}:=\Sigma_{\nabla_2,k}(\Lk{\widehat{\bigoplus}_{i\in\ZZ\setminus\{n\}}k.S^i})$. We
      have $\nabla_1=0$. By \cite[Lemma~2.29]{Cons}, we have:
      \[\Sigma_{d-n}=\Sigma_{\nabla_1}\cup\Sigma_{\nabla_2}=\{0\}\cup
        \Sigma_{\nabla_2}.\]
      We now prove that
      \[\diso{0}{1}\cap\Sigma_{\nabla_2}=\emptyset.\]
      The operator $\nabla_2$ is invertible in
      $\Lk{\widehat{\bigoplus}_{i\in\ZZ\setminus\{n\}}k.S^i}$. Indeed,
      let $g(S)=\sum_{i\in \ZZ\setminus\{n\}}b_i S^i\in \widehat{\bigoplus}_{i\in\ZZ\setminus\{n\}}k.S^i$. If there exists
    $f=\sum_{i\in \ZZ\setminus\{n\}}a_i S^i\in \widehat{\bigoplus}_{i\in\ZZ\setminus\{n\}}k.S^i$ such that $\nabla_2(f)=g$ , then for
    each $i\in \ZZ\setminus\{n\}$ we have
    \[a_i= \frac{b_i}{(i-n)}.\]
    Since $|a_i|=|b_i|$, the element $f$ exists and it is unique,
    moreover $|f|=|g|$. Hence, $\nabla_2$ is invertible in
    $\Lk{\widehat{\bigoplus}_{i\in\ZZ\setminus\{n\}}k.S^i}$ and as a
    $k$-linear map it is isometric. Therefore, we have
    $\nsp{\nabla_2\-1}=1$. Hence, by
 \cite[Lemma~2.20]{Cons}
    $\diso{0}{1}\subset\Ak\setminus\Sigma_{\nabla_2}$. Consequently,
    $\diso{0}{1}\cap \Sigma_{d-n}=\{0\}$. As
    $\Sigma_d=\Sigma_{d-n}+n$
      (cf. \cite[Proposition7.1.4]{Ber}), we have
      $\diso{n}{1}\cap\Sigma_d=\{n\}$. Therefore, for all $n\in
      \ZZ$ we have
      $(n,x_{0,1})\subset\Ak\setminus \Sigma_d$ and the claim follows. 
  \end{proof}

\subsection{Spectrum of a differential module without regular part}
              
Recall the following Theorem, which is the celebrated theorem of Turrittin. It ensures that any differential module becomes extension of rank one differential modules after pull-back by a suitable ramified extension. 

\begin{Theo}[\cite{Tur55}]\label{sec:spectr-pure-irreg}
  Let $(M,\nabla)$ be a differential module over $(\KS,S\d)$. There exists
  a finite extension $F=k\((S^{\fra{m}}\))$ such that we have:

  \begin{equation}
   \label{eq:24}
    ({I_F}^* M, {I_F}^*\nabla)=\Bigoplus_{i=1}^N (\DD_F/\DD_F.(D-f_i)^{\alpha_i},D)    
  \end{equation}
where $f_i\in k[\![S^{-\fra{m}}]\!]$ and $\alpha_i \in\NN$.
\end{Theo}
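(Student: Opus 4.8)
The plan is to follow the classical Hukuhara--Turrittin--Levelt reduction, assembling it from the tools of Section~\ref{sec:diff-module-over}. First I would apply the decomposition theorem according to the slopes (\cite{DMR07}; see also Corollary~\ref{sec:newt-polyg-decomp}) to write $(M,\nabla)=\Bigoplus_{\gamma}(M_\gamma,\nabla_\gamma)$ with each block of a single slope $\gamma$. Since the pull-back functor $I_F^{*}$ commutes with direct sums, and a common ramification can be obtained as a least common multiple of the ramifications needed for the individual blocks, it suffices to bring each single-slope block into the desired form. The block of slope zero is the regular singular case: it has a basis in which the matrix $G$ is constant, $G\in\cM_n(k)$, and since $k$ is algebraically closed we may put $G$ in Jordan normal form. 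A Jordan block of eigenvalue $a\in k$ and size $\alpha$ yields, through the cyclic vector theorem, a summand isomorphic to $\DD_{\KS}/\DD_{\KS}.(D-a)^{\alpha}$ with $a\in k\subset k[\![S^{-\fra{m}}]\!]$, so this part is already of the required shape (with $m=1$).

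Next I would treat a block of a single nonzero slope $\gamma=p/q$ in lowest terms. Passing to $F=k\((S^{\fra{q}}\))$ and setting $Z=S^{\fra{q}}$, the pull-back $(I_F^{*}M_\gamma,I_F^{*}\nabla_\gamma)$ has integral slope $b$ with respect to the $Z$-adic valuation, since ramification multiplies the slope by $q$ (cf. Section~\ref{sec:spectr-diff-module-4}). With the derivation now equal to $\frac{Z}{q}\dz$, the connection matrix reads $G=\sum_{j\geq -b}A_jZ^{j}$ with leading coefficient $A_{-b}\neq 0$. Over the algebraically closed field $k$ this matrix has distinct eigenvalues $c_1,\dots,c_s$, and the decisive step is a \emph{splitting according to the leading term}: the module breaks up as a direct sum of submodules on each of which $A_{-b}$ has a single eigenvalue $c_i$. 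For such a piece I would replace $\nabla$ by $\nabla-c_iZ^{-b}\Id$ — equivalently tensor with the rank one module $\DD_F/\DD_F.(D+c_iZ^{-b})$ — which cancels the dominant term and produces a module of strictly smaller slope.

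Iterating this twist-and-split step, the slope (equivalently the pole order of the leading coefficient) strictly decreases, so after finitely many steps — each possibly demanding a further ramification to restore integrality of the slope, all absorbed into a single final exponent $m$ — one reaches slope zero and finishes by the regular singular case already treated. Reassembling, every indecomposable summand obtained is of the form $\DD_F/\DD_F.(D-f_i)^{\alpha_i}$, where $f_i\in k[\![S^{-\fra{m}}]\!]$ collects the successive dominant terms $c_iZ^{-b}$ produced along the reduction together with the final constant exponent. The hard part will be the splitting step precisely when $A_{-b}$ has a single repeated eigenvalue: the leading term then furnishes no decomposition, and one must show that a suitable further ramification does strictly lower the slope (or separates the eigenvalues at the next order), thereby guaranteeing termination. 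This Newton-polygon/Hensel-type analysis is the technical heart of Turrittin's theorem; the remaining bookkeeping — commuting $I_F^{*}$ with direct sums, tracking the exponents $f_i$, and choosing a common $m$ — is routine and supported by the material of Section~\ref{sec:diff-module-over}.
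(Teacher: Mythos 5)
Your outline reproduces the classical Hukuhara--Turrittin reduction faithfully, but it stops exactly at the step that constitutes the theorem's actual content, and you say so yourself. The splitting according to the leading term only works when $A_{-b}$ has at least two distinct eigenvalues; when $A_{-b}$ has a single eigenvalue $c$, twisting by $D+cZ^{-b}$ leaves a \emph{nilpotent} leading coefficient, and at that point nothing in your argument decreases. The pole order of the connection matrix in a chosen basis is not the slope: a nilpotent leading term typically means the basis overestimates the irregularity, and no unramified gauge transformation is guaranteed to lower the pole order, so your induction quantity does not strictly decrease. The assertion that ``a suitable further ramification does strictly lower the slope (or separates the eigenvalues at the next order)'' is precisely the statement that has to be proved; filling it requires a genuine mechanism --- Levelt's lattice (rank-reduction) argument, Wasow's shearing transformations, or passage to a cyclic vector followed by a Hensel-type factorization of the differential operator governed by its Newton polygon. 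None of these is routine bookkeeping, and without one of them the induction may simply fail to terminate. A secondary, smaller gap of the same nature: even before the twist, writing $G=\sum_{j\geq -b}A_jZ^j$ with pole order equal to the (integral) slope $b$ already presupposes a well-chosen basis, which again needs an argument of Moser/Katz type rather than being automatic.

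For comparison, the paper does not reprove this statement at all: it is quoted as Turrittin's theorem and its proof is the citation \cite[Theorem 3.1]{vanga}. So a complete answer was either to cite the literature, as the paper does, or to carry out the nilpotent-leading-term analysis in full; your proposal does neither. The parts you do spell out --- compatibility of $I_F^{*}$ with direct sums, the regular singular case via Jordan form giving summands $\DD_{\KS}/\DD_{\KS}.(D-a)^{\alpha}$, absorbing the successive ramifications into a single index $m$ --- are correct and consistent with the material of Section~\ref{sec:diff-module-over}, but they are the easy perimeter of the theorem, not its core.
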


\begin{proof}
  See \cite[Theorem 3.1]{vanga}.
\end{proof}

Now, in order to compute the spectrum, we need the following
result.

\begin{Pro}\label{sec:spectr-line-diff-4}
  Let $f=\sum_{i\in\ZZ}a_iS^{\frac{i}{m}}$ an element of $F:=k\(( S^{\fra{m}}\)) $ and let $(F,\nabla)$ be the differential module of
  rank one such that $\nabla=S\d+f$.
  If $v(f)<0$, then the spectrum of
  $\nabla$ as an element of $\Lk{F }$ is:
  \[\Sigma_{\nabla,k}(\Lk{F})=\{x_{0,r^{v(f)}}\}.\]
\end{Pro}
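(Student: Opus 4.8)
The plan is to realise $\nabla = S\d + f$ as a dominant multiplication operator perturbed by a comparatively small derivation, and then to pin down $\Sigma_\nabla$ by a two–sided estimate: an outer bound coming from the spectral radius, and an inner exclusion obtained by inverting every translate $\nabla - c$ with $c\in k$. Throughout I view $f$ as the multiplication operator $m_f\in\Lk{F}$ and write $d:=S\d$. Transporting Lemma~\ref{sec:spectr-regul-sing-2} to $F=k\(( S^{\fra{m}}\))$ via $S\d=\frac{Z}{m}\dz$ with $Z=S^{\fra{m}}$ gives $\nor{d}=\nsp{d}=1$, whereas $\nor{m_f}=\nor{f}=r^{v(f)}$. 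The decisive feature I will exploit is that $f$ dominates the derivation, i.e.\ $r^{v(f)}>1$: this is exactly what separates the present regime from the regular one and forces the whole spectrum to collapse onto a single Gauss point.

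First I would establish the outer bound. Since $\nor{m_f}=r^{v(f)}$ strictly exceeds $\nor{d}=1$, the ultrametric inequality yields $\nor{\nabla}=r^{v(f)}$; more precisely, expanding $\nabla^n=(m_f+d)^n$, every summand containing at least one factor $d$ has norm at most $r^{(n-1)v(f)}<r^{n v(f)}=\nor{m_f^n}$, so the pure term $m_f^n$ strictly dominates and $\nor{\nabla^n}=r^{n v(f)}$. Hence $\nsp{\nabla}=r^{v(f)}$, and Theorem~\ref{sec:defin-basic-propr}(2) gives $\Sigma_\nabla\subseteq \disf{0}{r^{v(f)}}$.

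Next I would produce the inner exclusion. Fix $c\in k$. Subtracting the constant $c$ does not alter the lowest-order term of $f$, so $v(f-c)=v(f)$ and $f-c$ is a unit of $F$ with $\nor{(f-c)^{-1}}=r^{-v(f)}$. Factoring $\nabla-c=m_{f-c}+d=m_{f-c}\bigl(\Id+m_{(f-c)^{-1}}\,d\bigr)$ and observing $\nor{m_{(f-c)^{-1}}\,d}\le r^{-v(f)}<1$, the Neumann series converges, so $\nabla-c$ is invertible with $\nor{(\nabla-c)^{-1}}\le r^{-v(f)}$; in particular $\Sigma_\nabla\cap k=\emptyset$. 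Lemma~\ref{sec:defin-basic-propr-1} then shows that the largest open disk around $c$ avoiding $\Sigma_\nabla$ has radius $\nsp{(\nabla-c)^{-1}}^{-1}\ge r^{v(f)}$, so $\diso{c}{r^{v(f)}}\subseteq \Ak\setminus\Sigma_\nabla$. As $r^{v(f)}>1$, the description \eqref{eq:5} of open disks of radius $>1$ shows $\diso{c}{r^{v(f)}}=\coprod_{a\in k}[a,x_{0,1})\coprod[x_{0,1},x_{0,r^{v(f)}})$, which is independent of $c$; its complement is the ray $[x_{0,r^{v(f)}},\infty)$, whence $\Sigma_\nabla\subseteq [x_{0,r^{v(f)}},\infty)$.

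Combining the two containments gives $\Sigma_\nabla\subseteq \disf{0}{r^{v(f)}}\cap [x_{0,r^{v(f)}},\infty)=\{x_{0,r^{v(f)}}\}$, and non-emptiness of the Berkovich spectrum (Theorem~\ref{sec:defin-basic-propr}(1)) forces equality. The main obstacle I anticipate is the norm bookkeeping underlying the domination: rigorously justifying $\nor{\nabla^n}=r^{n v(f)}$ despite the noncommutativity of $d$ and $m_f$, and checking that $\nor{(\nabla-c)^{-1}}$ is controlled uniformly in $c$ so that Lemma~\ref{sec:defin-basic-propr-1} applies with the same radius at every $k$-point. Once these estimates are in hand, the conclusion is a clean intersection of disks in the tree $\Ak$.
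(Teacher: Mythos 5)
Your argument is sound, but note first that what you actually prove is the proposition under the hypothesis $r^{v(f)}>1$, i.e.\ $v(f)<0$, not the stated $v(f)<1$. This is in fact the only reading under which the statement is true: if $0\le v(f)<1$ (e.g.\ $f=S^{\fra{m}}$), the analogue of Remark~\ref{sec:newt-polyg-decomp-1} over $F$ gives $(F,S\d+f)\simeq (F,S\d+f(0))$, whose spectrum contains $f(0)+\frac{1}{m}\ZZ$ and is certainly not a single point. The paper's own proof has the same implicit requirement (it needs $|f-a_0|>1$, i.e.\ a nonzero polar part), and in the only application, formula \eqref{eq:24}, one has $f_i\in k[\![S^{-\fra{m}}]\!]\setminus k$, hence $v(f_i)<0$. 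So your strengthening of the hypothesis is not a gap, but it should be flagged.

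Granting $v(f)<0$, your proof is correct and takes a genuinely different route from the paper's. The paper first gauges away the non-negative part of $f$ (Remark~\ref{sec:newt-polyg-decomp-1}), computes the spectrum of multiplication by $f$ as the single point $f(x_{0,r^{\fra{m}}})=x_{0,r^{v(f)}}$ via the maximal-commutative-subalgebra argument (Lemmas~\ref{sec:spectr-vers-youngs} and~\ref{sec:spectr-line-diff-3}), and then, for \emph{every} point $y\ne x_{0,r^{v(f)}}$ of $\Ak$, inverts $\nabla\ot 1-1\ot T(y)$ in $\Lk{F}\ct_k\Hy$ by the same dominant-term perturbation you use; this costs the isometric embedding $F\ct_k\Hy\hookrightarrow \Lk{F}\ct_k\Hy$ and the explicit description of $F\ct_k\Hy$ as a circle algebra over $\Hy$. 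You bypass all of this: the gauge reduction is unnecessary in your setup because $v(f-c)=v(f)$ for every $c\in k$ whatever the non-negative tail of $f$; you never compute the spectrum of multiplication by $f$; and you never extend scalars, since for $c\in k$ one has $\h{c}=k$, so the Neumann inversion of $\nabla-c$ takes place in $\Lk{F}$ itself. The outer bound then comes from $\nsp{\nabla}\le r^{v(f)}$ and Theorem~\ref{sec:defin-basic-propr}(2), and the inner exclusion from Lemma~\ref{sec:defin-basic-propr-1} together with the tree structure \eqref{eq:4}--\eqref{eq:5} of $\Ak$ over a trivially valued field. What your route buys is economy and self-containedness; what the paper's route buys is pointwise control at every $y\in\Ak$ and a method that does not lean on the peculiarity that all open disks of radius $>1$ centered at points of $k$ coincide, so it is the one that would survive over a nontrivially valued base; it also explains conceptually why the answer is the image point $f(x_{0,r^{\fra{m}}})$. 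Two simplifications of your own argument: the "main obstacle" you anticipate is not one, since for the outer bound you do not need the exact equality $\nor{\nabla^n}=r^{nv(f)}$ --- the crude estimate $\nsp{\nabla}\le\nor{\nabla}\le\max(\nor{f},\nor{S\d})=r^{v(f)}$ already suffices; and no uniformity in $c$ is needed, since a single $c$ (say $c=0$) already yields $\Sigma_{\nabla,k}(\Lk{F})\subseteq [x_{0,r^{v(f)}},\infty)$.
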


The following results are necessary to prove this proposition.

             \begin{Lem}\label{sec:spectr-vers-youngs}
                Let $\Omega\in E(k)$. Consider the isometric embedding of $k$-algebras
                \[\Fonction{\Omega}{\Lk{\Omega}}{a}{b\mapsto a.b}.\]
                With respect to this embedding, $\Omega$ is a maximal
                commutative subalgebra of $\Lk{\Omega}$.
              \end{Lem}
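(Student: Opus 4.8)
The plan is to verify two independent things: that the displayed map $a\mapsto m_a$, where $m_a$ denotes the operator $b\mapsto a.b$, is an isometric embedding of $k$\nbh algebras, and that its image is maximal among commutative subalgebras of $\Lk{\Omega}$. For the first point, I would note that each $m_a$ is $k$\nbh linear and that $a\mapsto m_a$ is a unital ring homomorphism, since $m_{aa'}=m_a\circ m_{a'}$ and $m_1=\Id$. Because the absolute value on the valued field $\Omega$ is multiplicative, one has $\nor{m_a(b)}=\nor{ab}=\nor{a}\,\nor{b}$ for every $b$, so the operator norm is exactly $\nor{m_a}=\nor{a}$; this shows simultaneously that the map is bounded, isometric, and (being isometric) injective. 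Commutativity of $\Omega$ then makes the image a commutative subalgebra.

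For maximality I would argue by computing the centralizer of $\Omega$ in $\Lk{\Omega}$. Let $A\subseteq\Lk{\Omega}$ be any commutative subalgebra containing the image of $\Omega$, and let $\phi\in A$. Since $A$ is commutative and contains all the $m_a$, the operator $\phi$ commutes with every $m_a$, whence $\phi$ is $\Omega$\nbh linear: for all $a,b\in\Omega$ we have $\phi(ab)=\phi(m_a(b))=m_a(\phi(b))=a\,\phi(b)$. The key observation is that $1\in\Omega$, so setting $c:=\phi(1)$ gives $\phi(b)=\phi(b\cdot 1)=b\,\phi(1)=bc=m_c(b)$ for every $b\in\Omega$. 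Hence $\phi=m_c$ lies in the image of $\Omega$. This proves $A\subseteq\Omega$ (under the identification of $\Omega$ with its image), and therefore $A=\Omega$, which is precisely the maximality claim.

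There is no serious obstacle here; the argument is essentially the standard fact that a commutative object coincides with its own commutant in its multiplication representation. The only points requiring care are that boundedness and the isometry property genuinely rely on the multiplicativity of the absolute value of $\Omega$ (valid since $\Omega\in E(k)$ is a complete valued field), and that the centralizer computation uses the presence of the unit: it is the evaluation $\phi\mapsto\phi(1)$ that pins down the scalar $c$. One may phrase the conclusion as the equality $\Omega=C_{\Lk{\Omega}}(\Omega)$ between $\Omega$ and its centralizer, from which maximality among commutative subalgebras is immediate, since any such subalgebra containing $\Omega$ is contained in $C_{\Lk{\Omega}}(\Omega)=\Omega$.
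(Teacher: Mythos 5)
Your proof is correct and follows essentially the same route as the paper: the paper also argues that any commutative subalgebra $A\supseteq\Omega$ consists of operators commuting with $\Omega$, hence of $\Omega$-linear endomorphisms, and then uses $\cL_\Omega(\Omega,\Omega)=\Omega$ to conclude $A=\Omega$. Your evaluation-at-$1$ step just makes that last identification explicit (and your verification of the isometry, which the paper treats as given in the statement, is a harmless addition).
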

              \begin{proof}
  Let $A$ be a commutative subalgebra of $\Lk{\Omega}$ such that
  $\Omega\subset A$. Then each element of  $A$ is an endomorphism of
  $\Omega$ that  commutes with the
  elements of $\Omega$. Therefore,
  $A\subset\LL{\Omega}{\Omega}=\Omega$. Hence, we have $A=\Omega$.
\end{proof}

\begin{Lem}\label{sec:spectr-line-diff-3}
 Let $\Omega\in E(k)$ and $\pik{\Omega}:\A{\Omega}\to \Ak$ be the
 canonical projection. Let $\alpha\in\Omega$. The spectrum of $\alpha$ as an element of $\Lk{\Omega}$ is
  $\Sigma_\alpha(\Lk{\Omega})=\{\pik{\Omega}(\alpha)\}$.   
\end{Lem}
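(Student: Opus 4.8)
The plan is to prove Lemma~\ref{sec:spectr-line-diff-3}, namely that for $\alpha\in\Omega$ its spectrum as an element of $\Lk{\Omega}$, where $\Omega$ acts on itself by multiplication, is the single point $\{\pik{\Omega}(\alpha)\}$. Here $\pik{\Omega}(\alpha)$ is the image in $\Ak$ of the character $k[T]\to\Omega$, $T\mapsto\alpha$; equivalently it is the point $x_{c,s}$ determined by the induced multiplicative seminorm $P\mapsto |P(\alpha)|_\Omega$ on $k[T]$.

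First I would fix a point $x\in\Ak$ and analyse when $\alpha\ot 1-1\ot T(x)$ is invertible in $\Lk{\Omega}\Ct{k}\Hx$. The key structural input is Lemma~\ref{sec:spectr-vers-youngs}: the embedding $\Omega\hookrightarrow\Lk{\Omega}$, $a\mapsto(b\mapsto ab)$, is isometric and realises $\Omega$ as a \emph{maximal commutative} subalgebra of $\Lk{\Omega}$. Consequently $\Sigma_\alpha(\Lk{\Omega})=\Sigma_\alpha(\Omega)$, the spectrum of $\alpha$ computed inside the commutative $k$-Banach algebra $\Omega$ itself. Indeed, by Lemma~\ref{sec:berk-spectr-theory-1} applied to the bounded inclusion $\Omega\hookrightarrow\Lk{\Omega}$ one gets $\Sigma_\alpha(\Lk{\Omega})\subset\Sigma_\alpha(\Omega)$; for the reverse inclusion I would argue that if $\alpha\ot 1-1\ot T(x)$ is invertible in $\Lk{\Omega}\Ct{k}\Hx$ then its inverse, commuting with the image of the maximal commutative algebra $\Omega\Ct{k}\Hx$, already lies in $\Omega\Ct{k}\Hx$, so invertibility transfers back. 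This reduces the whole computation to a spectrum inside the commutative field-like algebra $\Omega$.

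Next I would identify $\Sigma_\alpha(\Omega)$ for $\Omega$ a complete valued field. The character $\chi:k[T]\to\Omega$, $T\mapsto\alpha$, has kernel a prime ideal $\mfp$; since $k$ is algebraically closed and $\Omega$ is a field, either $\alpha$ is transcendental over $k$ (so $\mfp=0$) or $\alpha\in k$ (so $\mfp=(T-\alpha)$, as $k$ is algebraically closed). In either case the induced seminorm $|P|:=|P(\alpha)|_\Omega$ on $k[T]$ is multiplicative and restricts to $|.|$ on $k$, hence defines exactly the point $y:=\pik{\Omega}(\alpha)\in\Ak$. The claim is then that $\alpha\ot 1-1\ot T(x)$ fails to be invertible in $\Omega\Ct{k}\Hx$ precisely when $x=y$. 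For $x=y$, the element $\alpha\ot 1-1\ot T(y)$ maps to $0$ under the natural contraction $\Omega\Ct{k}\h{y}\to\Omega$ coming from the factorisation of $\chi$ through $\h{y}$, so it cannot be invertible. For $x\ne y$, I would exhibit the inverse explicitly via a convergent geometric-type series: whichever of $|\alpha|_\Omega$ or $|T(x)|_x$ is the strictly larger value controls the expansion $(\alpha\ot1-1\ot T(x))^{-1}=\pm(1\ot T(x))^{-1}\sum_{n\ge0}(\alpha\ot 1)^n(1\ot T(x))^{-n}$ (or the symmetric expansion), which converges in the tensor-norm completion exactly because the two seminorms disagree at $x\ne y$.

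The main obstacle I anticipate is the reverse inclusion $\Sigma_\alpha(\Omega)\subset\Sigma_\alpha(\Lk{\Omega})$, i.e.\ checking that invertibility in the larger algebra $\Lk{\Omega}\Ct{k}\Hx$ descends to $\Omega\Ct{k}\Hx$; this is where the maximal-commutativity of Lemma~\ref{sec:spectr-vers-youngs} must be upgraded from $\Lk{\Omega}$ to its completed tensor product with $\Hx$, and one must ensure the inverse is genuinely bounded rather than merely a set-theoretic inverse (the subtlety flagged in the introduction about the failure of the open mapping theorem over trivially valued $k$). The convergence estimates for the resolvent series are routine given that $x\ne y$ forces a strict inequality between $|\alpha|_\Omega$ and $|T(x)|_x$, so I would treat them briefly and concentrate the care on the descent-of-invertibility step.
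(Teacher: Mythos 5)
Your overall architecture matches the paper's: first compute the spectrum of $\alpha$ inside the commutative algebra $\Omega$, then transfer it to $\Lk{\Omega}$ via the embedding of Lemma~\ref{sec:spectr-vers-youngs}. But the part you treat as routine is where the proposal breaks. Your argument that $x\ne y:=\pik{\Omega}(\alpha)$ implies invertibility of $\alpha\ot 1-1\ot T(x)$ in $\Omega\Ct{k}\Hx$ rests on the claim that $x\ne y$ forces $|\alpha|_\Omega\ne|T(x)|_x$; this is false. Take $\Omega=\h{x_{1,1/2}}$, i.e. $\Omega=k(\!(T-1)\!)$ with $|T-1|=1/2$, let $\alpha$ be the image of $T$ (so $y=x_{1,1/2}$ and $|\alpha|_\Omega=\max(1,1/2)=1$), and take $x=x_{0,1}$ the Gauss point, so $|T(x)|_x=1$. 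Then $x\ne y$ while $|\alpha|_\Omega=|T(x)|_x$, and both of your geometric series have all terms of norm $1$ (here $\Hx=k(T)$ is trivially valued, so $|T(x)^{-1}|_x=1$, and $|\alpha^{-1}|_\Omega=1$ by multiplicativity), hence diverge. The element is nevertheless invertible: recentering at $1\in k$ gives $\alpha\ot1-1\ot T(x)=-\bigl(1\ot(T(x)-1)\bigr)\bigl(1\ot 1-(\alpha-1)\ot(T(x)-1)^{-1}\bigr)$, and the second factor is invertible since $\nor{(\alpha-1)\ot(T(x)-1)^{-1}}\leq 1/2<1$. So a hands-on proof must recenter at points of $k$ and run a case analysis on the positions of $x$ and $y$ (feasible in this paper's setting only because $k$ is trivially valued and algebraically closed, so every point of $\Ak$ has a center in $k$); the paper avoids all of this by quoting the general fact that the spectrum of an element of a commutative Banach algebra is the image of its multiplicative spectrum, \cite[Proposition~7.1.4, i)]{Ber}, which for the complete valued field $\Omega$ is a single point.

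Conversely, the step you single out as the main obstacle --- descending invertibility from $\Lk{\Omega}\Ct{k}\Hx$ to $\Omega\Ct{k}\Hx$ --- is indeed left open in your proposal (maximal commutativity of $\Omega$ in $\Lk{\Omega}$ does not formally pass to the completed tensor products; the paper closes exactly this point by citing \cite[Proposition~7.2.4]{Ber}), but for this particular lemma it can be bypassed entirely. Once $\Sigma_{\alpha,k}(\Omega)=\{\pik{\Omega}(\alpha)\}$ is known, Lemma~\ref{sec:berk-spectr-theory-1} applied to the isometric embedding $\Omega\hookrightarrow\Lk{\Omega}$ gives $\Sigma_{\alpha,k}(\Lk{\Omega})\subset\Sigma_{\alpha,k}(\Omega)$, and this inclusion is an equality because the left-hand side is non-empty by Theorem~\ref{sec:defin-basic-propr}. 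In short, the difficulty you flagged is avoidable, while the genuine gap sits in the commutative computation you deemed routine.
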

\begin{proof}
By \cite[Proposition~7.1.4, i)]{Ber}, the spectrum of $\alpha$ as an element of $\Omega$ is
 the point which corresponds to the character $k[T]\to \Omega$,
 $T\mapsto \alpha$. Hence, $\Sigma_{\alpha,k}(\Omega)=\{\pik{\Omega}(\alpha)\}$. By
 Lemma~\ref{sec:spectr-vers-youngs} and \cite[Proposition~7.2.4]{Ber} we conclude. 
\end{proof}

\begin{proof}[Proof of Proposition~\ref{sec:spectr-line-diff-4}]
  We set $d:=S\d$. We can assume that $f=\sum_{i\in\NN}a_iS^{\frac{-i}{m}}$. Indeed,
  since $f=f_-+f_+$ with $f_-:=\sum_{i<0}a_iS^{\frac{-i}{m}}$ and
  $f_+:=\sum_{i\geq 0}a_iS^{\frac{-i}{m}}$, according to
  Remark~\ref{sec:newt-polyg-decomp-1} there exists $g\in k\(( S^{\fra{m}}\))$ such that
  $f_+-a_0=\frac{S\d(g)}{g}$. Therefore, we have $(k\((
  S^{\fra{m}}\)),\nabla)\simeq (k\((
  S^{\fra{m}}\)),S\d+f_-+a_0)$. Since the point $\pik{F}(f)$ corresponds to the
  character $k[T]\to F$ , $T\mapsto f$ and $F\simeq \h{x_{0,r^{\fra{m}}}}$, it coincides with
  $f(x_{0,r^{\fra{m}}})$(cf. Notation~\ref{sec:analytic-affin-line}). Moreover,
  we have $f(x_{0,r^{\fra{m}}})=x_{0,|f|}=x_{0,r^{v(f)}}$. By Lemma~\ref{sec:spectr-line-diff-3}
   $\Sigma_{f,k}(\Lk{F})=\{x_{0,r^{v(f)}}\}$. Let us prove now
 that $\Sigma_{\nabla,k}(\Lk{F})=\{x_{0,r^{v(f)}}\}$. Let
 $y\in\Ak\setminus\{x_{0,r^{v(f)}}\}$. We know that $f\ot 1-1 \ot
 T(y)$ is invertible in $F\ct_k\Hy$, hence invertible in $\Lk{F}\ct_k\Hy$. Since $d\ot
 1=(\nabla\ot 1-1\ot T(y))-(f\ot 1 -1\ot T(y))$, in order to prove
 that $\nabla\ot 1-1\ot T(y)$ is invertible, it is enough to show that
 \[\nor{d\ot 1}<\nor{(f\ot 1-
      1\ot T(y))\-1}\-1.\]
In order to do so, since
  $\nor{d}=\nor{d\ot 1}=1$ (cf. \cite[Lemma~2.3]{Cons} and Lemma \ref{sec:spectr-regul-sing-2} ), it is enough to
  show that $1<\nor{(f\ot 1-
      1\ot T(y))\-1}\-1$. On the one hand, since $F
  \hookrightarrow \Lk{F}$ is an isometric embedding, then so is
  $F\ct_k \Hy\to \Lk{F}\ct_k \Hy$ (cf. \cite[Lemme 3.1]{poi}). On
  the other hand, we have $F\ct_k \Hy=\fCouf{\Hy}{0}{r^{\fra{m}}}{r^{\fra{m}}}$. Therefore, we have
  \[\nor{(f\ot 1-
      1\ot T(y))\-1}\-1=\nor{f\ot 1- 1\ot
      T(y)}=\max(|f-a_0|,|T(y)-a_0|).\]
  Consequently, we obtain $1<\nor{(f\ot 1-
      1\ot T(y))\-1}\-1 $ in $F\ct_k\Hy$, hence in
    $\Lk{F}\ct_k\Hy$. Since
    the spectrum $\Sigma_{\nabla,k}(\Lk{F})$ is not empty, we conclude
    that $\Sigma_{\nabla,k}(\Lk{F})=\{x_{0,r^{v(f)}}\}$.
\end{proof}

\begin{Pro}\label{sec:spectr-diff-module-1}
  Let $(M,\nabla)$ be a differential module
  over $(\KS,S\d)$ without regular part. The spectrum of $\nabla$ as an element of $\Lk{M}$ is:
  \[\Sigma_{\nabla,k}(\Lk{M})=\{x_{0,r^{v(f_1)}},\cdots, x_{0,r^{v(f_N)}}\}\]
where the $f_i$ are as in the formula \eqref{eq:24}.
\end{Pro}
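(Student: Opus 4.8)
The plan is to pull back along a ramified extension so that Turrittin's theorem applies, compute the resulting spectrum from the rank-one case treated in Proposition~\ref{sec:spectr-line-diff-4}, and then descend back to $\KS$ using the ramification formula \eqref{eq:22}.

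First I would apply Theorem~\ref{sec:spectr-pure-irreg} to fix a finite extension $F=k\(( S^{\fra{m}}\))$ together with a decomposition
\[({I_F}^* M, {I_F}^*\nabla)=\Bigoplus_{i=1}^N (\DD_F/\DD_F.(D-f_i)^{\alpha_i},D),\qquad f_i\in k[\![S^{-\fra{m}}]\!].\]
Because $(M,\nabla)$ has no regular part, no $f_i$ can be constant: a constant (in particular zero) $f_i$ would make the corresponding factor regular singular, whereas the pull-back of a module without regular part again has no slope $0$ (the valuation is normalised so that $v(S)=1$ on both $\KS$ and $F$). Hence $v(f_i)<0<1$ for every $i$. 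Combining additivity of the spectrum under direct sums (Corollary~\ref{sec:spectr-diff-modul}) with the reduction of each block to a rank-one connection (Corollary~\ref{sec:spectr-diff-modul-1}), and then applying Proposition~\ref{sec:spectr-line-diff-4} to each $f_i$ (which is licit since $v(f_i)<1$), I obtain
\[\Sigma_{{I_F}^*\nabla,k}(\Lk{{I_F}^*M})=\Bigcup_{i=1}^N \Sigma_{S\d+f_i,k}(\Lk{F})=\{x_{0,r^{v(f_1)}},\cdots,x_{0,r^{v(f_N)}}\}.\]

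It remains to descend this equality to $\KS$, which is the only delicate point. By \eqref{eq:22} we have
\[\{x_{0,r^{v(f_1)}},\cdots,x_{0,r^{v(f_N)}}\}=\Bigcup_{i=0}^{m-1}\left(\frac{i}{m}+\Sigma_{\nabla,k}(\Lk{M})\right).\]
The term $i=0$ of the union already yields $\Sigma_{\nabla,k}(\Lk{M})\subseteq\{x_{0,r^{v(f_1)}},\cdots,x_{0,r^{v(f_N)}}\}$, so every point of $\Sigma_{\nabla,k}(\Lk{M})$ is of the form $x_{0,\rho}$ with $\rho=r^{v(f_j)}>1$ for some $j$. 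The key observation is that such points are invariant under the rational translations appearing in the union: as $k$ has characteristic $0$ we have $\frac{i}{m}\in k$, and since the valuation on $k$ is trivial the translate $\frac{i}{m}+x_{0,\rho}=x_{\frac{i}{m},\rho}$ equals $x_{0,\rho}$ as soon as $\rho\ge 1$ (cf. \eqref{eq:4}). Therefore $\frac{i}{m}+\Sigma_{\nabla,k}(\Lk{M})=\Sigma_{\nabla,k}(\Lk{M})$ for every $i$, the union collapses, and we conclude
\[\Sigma_{\nabla,k}(\Lk{M})=\{x_{0,r^{v(f_1)}},\cdots,x_{0,r^{v(f_N)}}\}.\]
The no-regular-part hypothesis is used essentially here: it forces all spectral points to radius $>1$, where the translations coming from ramification act trivially. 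Had a regular part been present, the associated spectral points would lie in $k$ (radius $0$) and the translations would genuinely move them, which is exactly why that case must be treated separately and is the main obstacle hidden in the descent step.
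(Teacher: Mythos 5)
Your proof is correct and follows essentially the same route as the paper's: Turrittin's decomposition combined with Corollaries~\ref{sec:spectr-diff-modul} and~\ref{sec:spectr-diff-modul-1} and Proposition~\ref{sec:spectr-line-diff-4} to get $\Sigma_{{I_F}^*\nabla}=\{x_{0,r^{v(f_1)}},\cdots,x_{0,r^{v(f_N)}}\}$, then descent through \eqref{eq:22} using that points $x_{0,\rho}$ with $\rho>1$ are fixed by the translations by $\frac{i}{m}$. Your treatment of the descent (the $i=0$ term forcing $\Sigma_{\nabla,k}(\Lk{M})$ to consist of points of radius $>1$, so the union collapses) is simply a more explicit rendering of the paper's own final paragraph, and your justification that no $f_i$ is constant likewise spells out what the paper asserts directly.
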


\begin{proof}
We set $\Sigma_\nabla:=\Sigma_{\nabla,k}(\Lk{M})$. By Theorem~\ref{sec:spectr-pure-irreg}, there exists
$F=k\((S^{\fra{m}}\))$ such

\[({I_F}^* M, {I_F}^*\nabla)=\Bigoplus_{i=1}^N
  \DD_F/\DD_F.(D-f_i)^{\alpha_i} \]
where $f_i\in k[\![S^{-\fra{m}}]\!]$. We set
$\Sigma_{{I_F}^*\nabla}:=\Sigma_{{I_F}^*\nabla,k}(\Lk{{I_F}^*M})$. Since
$(M,\nabla)$ is without regular part, we have $f_i\in k[\![S^{-\fra{m}}]\!]\setminus k
                                                                                $. By
Corollaries ~\ref{sec:spectr-diff-modul} and \ref{sec:spectr-diff-modul-1}, we have:

\[\Sigma_{{I_F}^*\nabla}=\Bigcup_{i=1}^N\Sigma_{S\d+f_i}(\Lk{F}).\]
By Proposition~\ref{sec:spectr-line-diff-4}, we have
$\Sigma_{S\d+f_i}(\Lk{F})=\{x_{0,r^{v(f_i)}}\}$. Hence,
\[\Sigma_{{I_F}^*\nabla}=\{x_{0,r^{v(f_1)}},\cdots, x_{0,r^{v(f_N)}}\}.\]
By the formula~\eqref{eq:22}, we have:

\[\Sigma_{{I_F}^*\nabla}=\Bigcup_{i=0}^{m-1}\frac{i}{m}+\Sigma_\nabla.\]
Since $r^{v(f_i)}>1$
for all $1\leq i\leq N$, then each element of
$\Sigma_{{I_F}^*\nabla}$ is invariant by translation by $\frac{j}{m}$
where $1\leq j\leq m$. This means that
$\Sigma_\nabla=\Sigma_\nabla+\frac{j}{m}$. Therefore, we have $\Sigma_{{I_F}^*\nabla}=\Sigma_\nabla$. 
\end{proof}

\begin{rem}\label{sec:spectr-diff-module-2}
  Note that, it is not easy to compute the $f_i$ of the
  formula~\eqref{eq:24}. However, the values $-v(f_i)$ coincide with
  the slopes of the differential module (cf. \cite{Kat87} and \cite[Remarks~3.55]{vanga}).
\end{rem}

We now prove the main statement of the paper that summarizes all the previous
results.

\begin{proof}[Proof of Theorem~\ref{sec:spectr-diff-module-3}]
  According to Theorem~\ref{sec:spectr-diff-module},
  Proposition~\ref{sec:spectr-diff-module-1} and
  Remark~\ref{sec:spectr-diff-module-2} we obtain the result.
\end{proof}

\begin{rem}
  We note that, although differential modules over $(\KS,S\d)$ are algebraic objects, their spectra in
the sense of Berkovich depend highly on the choice of the absolute
value on $\KS$. Indeed, as en exemple we chose
$(\DD_{\KS}/\DD_{\KS}\cdot (D-S^{-2}),D)$ and let $r$, $r'\in (0,1)$,
with $r\ne r'$. As fields
$\KS$, $\h{x_{0,r}}$, $\h{x_{0,r'}}$ are the same. However, we have
\[\Sigma_D(\Lk{\h{x_{0,r}}})=\{x_{0,r^{-2}}\}\ne \{x_{0,r'^{-2}}\}=\Sigma_D(\Lk{\h{x_{0,r'}}})\]
\end{rem}

\section{Conclusion}
  We may think that this result does not give more information than
  decomposition theorems. However, it shows clearly the existence of a
  link between the spectrum and the slopes of a differential
  module. Consequently, it leads to a connection between the spectrum
  and all radii of convergence.

  More precisely, assume that $(\KS,|.|)$ coincides
  with $\h{x_{0,r}}$ for some $r\in (0,1)$ and let $(M,\nabla)$ be a
  differential module over $(\KS,S\d)$. Let $\gamma_1<\gamma_2<\cdots
  <\gamma_\nu$ be the positive formal slopes of $(M,\nabla)$. Then the
  radii of
  convergence of the solutions of
  $(M_{\text{irr}},\nabla_{\text{irr}})$ are exactly:
  \[R_i= r^{\gamma_i};\; i\in\{1,\cdots,\nu\}\]
  (cf. \cite[Proposition~4.3.1]{and} and \cite[Section~4, (4.4)]{and}) and satisfy
  $R_1<R_2<\cdots<R_\nu$. Consider now the spectrum
  $\Sigma_\nabla$ of $(M,\nabla)$. If $
  \Sigma_\nabla\setminus k\cup\{x_{0,1}\}=\{x_{0,r_1},\cdots,x_{0,r_\nu}\}$, with $r_\nu<\cdots <
    r_0$,  then we can deduce
    directly that the
  radii of
  convergence of the solutions of the irregular part of
  $(M,\nabla)$ are  $R_i=\fra{r_i}$ for
  $i\in\{1,\cdots, \nu\}$ and we have:
  \[ R_1<R_2<\cdots <R_\nu\]

  This motivates to push the study further, and get an analogous result for
  a more general context. Notably, for the $p$-adic
  case, where the existens of $p$-adic Liouville number causes many
  problems namely the infinitude of de Rah cohomology. We hope that,
  using this spectrum, we
  can aviode the non-Liouville assumption. However, this notion of
  spectrum need to be more refined. Indeed, the multiplicity is
  clearly  a
  skipped information.

\printbibliography
Tinhinane Amina, AZZOUZ

\end{document}